
\documentclass[preprint, 10pt, english]{amsart}
\usepackage{amsthm}
\usepackage{amsmath}
\usepackage{latexsym, amssymb}
\usepackage{txfonts}
\usepackage{mathtools}
\usepackage{color}
\usepackage[all]{xy}

\newtheorem{thm}{Theorem}[section] 

\newtheorem{cor}[thm]{Corollary}

\newtheorem{lem}[thm]{Lemma}
\newtheorem{prop}[thm]{Proposition}

\theoremstyle{definition}
\newtheorem{rem}[thm]{Remark}

\newcommand\operA[2]{{\if!#2!\operatorname{#1}\else{\operatorname{#1}_{#2}^{\phantom{I}}}\fi}} 

%
%
%
%
%
%
%
%

\def\Br{{\operatorname{Br}}}

\renewcommand{\H}{\operatorname H}

\def\Gal{{\operatorname{Gal}}}


\newcommand{\Trace}[1][]{\if!#1!\operatorname{Tr}\else{\operatorname{Tr}_{#1}^{\phantom{I}}}\fi} 

\long\def\forget#1\forgotten{{}} %

\def\({\left(}
\def\){\right)}


\newcommand\LAY[3][]{{\begin{array}{c}\mbox{#2} \if#1!{}\else{+}\fi \\ \mbox{#3}\end{array}}}

\makeatletter

\def\ps@pprintTitle{%
 \let\@oddhead\@empty
 \let\@evenhead\@empty
 \def\@oddfoot{}%
 \let\@evenfoot\@oddfoot}

\newcommand{\bigperp}{%
  \mathop{\mathpalette\bigp@rp\relax}%
  \displaylimits
}

\newcommand{\bigp@rp}[2]{%
  \vcenter{
    \m@th\hbox{\scalebox{\ifx#1\displaystyle2.1\else1.5\fi}{$#1\perp$}}
  }%
}
\makeatother

\renewcommand{\geq}{\geqslant}
\renewcommand{\leq}{\leqslant}

\DeclareMathOperator{\ed}{ed}

\newif\iffurther
\furtherfalse


\begin{document}

\title{Essential Dimension, Symbol Length and $p$-rank}

\author{Adam Chapman}
\address{Adam Chapman, Department of Computer Science, Tel-Hai College, Upper Galilee, 12208 Israel}
\email{adam1chapman@yahoo.com}
\author{Kelly McKinnie}
\address{Kelly McKinnie Department of Mathematical Sciences, University of Montana, Missoula, MT 59812, USA}
\email{kelly.mckinnie@mso.umt.edu}

\begin{abstract}
We prove that the essential dimension of central simple algebras of degree $p^{\ell m}$ and exponent $p^m$ over fields $F$ containing a base-field $k$ of characteristic $p$ is at least $\ell+1$ when $k$ is perfect.
We do this by observing that the $p$-rank of $F$ bounds the symbol length in $\operatorname{Br}_{p^m}(F)$ and that there exist indecomposable $p$-algebras of degree $p^{\ell m}$ and exponent $p^m$. We also prove that the symbol length of the Milne-Kato cohomology group $\operatorname H^{n+1}_{p^m}(F)$ is bounded from above by $\binom rn$ where $r$ is the $p$-rank of the field, and provide upper and lower bounds for the essential dimension of Brauer classes of a given symbol length.
\end{abstract}

\keywords{
Essential Dimension, Symbol Length, $p$-rank, Fields of Positive Characteristic, Brauer Group, Central Simple Algebras, Kato-Milne Cohomology}
\subjclass[2010]{16K20 (primary); 13A35, 19D45, 20G10 (secondary)
}
\maketitle

\section{Introduction}

Given a field $k$ and a covariant functor $\mathcal{F} : \text{Fields}/k \rightarrow \text{Sets}$, the essential dimension of an object $x \in \mathcal{F}(F)$, denoted $\operatorname{ed}_{\mathcal{F}}(x)$, where $F$ is a field containing $k$, is the minimal transcendence degree of a field $E$ with $k \subseteq E \subseteq F$ for which there exists $x_0 \in \mathcal{F}(E)$ such that $x=x_0 \otimes_E F$. The essential dimension of the functor, denoted $\operatorname{ed}(\mathcal{F})$, is the supremum on the essential dimension of all the objects $x \in \mathcal{F}(F)$ for all fields $F \supseteq k$.
The essential $p$-dimension of an object $x \in \mathcal{F}$, denoted $\operatorname{ed}_{\mathcal{F}}(x;p)$, is defined to be the minimal $\operatorname{ed}_{\mathcal{F}}(x \otimes L)$ where $L$ ranges over all prime to $p$ field extensions of $F$. The essential $p$-dimension of $\mathcal{F}$, denoted $\operatorname{ed}(\mathcal{F};p)$, is defined to be the supremum on the essential $p$-dimension of all objects $x\in \mathcal{F}(F)$ for all fields $F \supseteq k$. Note that $\operatorname{ed}_{\mathcal{F}}(x;p) \leq \operatorname{ed}_{\mathcal{F}}(x)$ and $\operatorname{ed}(\mathcal{F};p) \leq \operatorname{ed}(\mathcal{F})$. See \cite{Merkurjev:2013} for a comprehensive discussion on these definitions and associated open problems.

Given a prime number $p$, a field $k$ of $\operatorname{char}(k)=p$ and integers $m,n$ with $n\geq m$, let $\operatorname{Alg}_{p^n,p^m}$ denote the functor mapping every field $F$ containing $k$ to the set of isomorphism classes of central simple algebras of degree $p^n$ and exponent dividing $p^m$ over $F$.
The computation of $\operatorname{ed}(\operatorname{Alg}_{p^n,p^m})$ is largely open (see \cite{BaekMerkurjev:2009, McKinnie:2017, Merkurjev:2013} for example).
It is known from \cite{Baek:2011} that for fields $k$ of $\operatorname{char}(k)=2$ we have $\operatorname{ed}(\operatorname{Alg}_{8,2}) \leq 10$, and when $k \supseteq \mathbb{F}_4$ we also have $\operatorname{ed}(\operatorname{Alg}_{4,2})=3$.
In the same paper it was also proven that for any $k$, $\operatorname{ed}(\operatorname{Alg}_{p^n,p^m};p) \geq 3$ when $n>m$.
In \cite{McKinnie:2017} it was shown that when $k$ is algebraically closed, $\ell+1 \leq\operatorname{ed}(\operatorname{Alg}_{p^\ell,p};p)$ for any $\ell$, improving the previous lower bound of 3 when $\ell \geq 2$.

The goal of this paper (Theorem \ref{Alg}) is to prove that $\ell+1 \leq \operatorname{ed}(\operatorname{Alg}_{p^{\ell m},p^m};p)$ when $k$ is perfect for any $\ell$ and $m$, recovering and extending the result from \cite{McKinnie:2017}. The techniques in this paper are simpler than \cite{McKinnie:2017}, relying on symbol length bounds and indecomposable division algebras which already exist in the literature, instead of analyzing sums of generic $p$-symbols. We also provide upper and lower bounds for the essential dimension of Brauer classes (and other Kato-Milne cohomology groups) of a given symbol length.

\section{Preliminaries}
For a field $F$ of $\operatorname{char}(F)=p$ and positive integers $m$ and $n$, the Kato-Milne coholology group $\H_{p^m}^{n+1}(F)$ is the additive group $W_m(F) \otimes \underbrace{F^\times \otimes \dots \otimes F^\times}_{n \ \text{times}}$ modulo the relations
\begin{itemize}
\item $(\omega^p-\omega) \otimes b_1 \otimes \dots \otimes b_n=0$, 
\item $(0\dots0,a,0,\ldots,0) \otimes a \otimes b_2 \otimes \dots \otimes b_n=0$, and 
\item $\omega \otimes b_1 \otimes \dots \otimes b_n=0$ where $b_i=b_j$ for some $i \neq j$,
\end{itemize}
where $W_m(F)$ is the ring of truncated Witt vectors of length $m$ over $F$, and for each $\omega=(\omega_1,\dots,\omega_m)$, $\omega^p$ stands for $(\omega_1^p,\dots,\omega_m^p)$. For a comprehensive reference on these groups see \cite{AravireJacobORyan:2018}.
The generators $\omega \otimes b_1 \otimes \dots \otimes b_n$ are called ``($p^m$-)symbols". For $n=1$, these groups describe the $p^m$-torsion of the Brauer group, i.e., $\operatorname{H}_{p^m}^2(F) \cong \operatorname{Br}_{p^m}(F)$ with the isomorphism given by $\omega \otimes b \mapsto [\omega,b)_F$, where $[\omega,b)_F$ stands for the cyclic algebra generated by $\theta_1,\dots,\theta_m$ and $y$ satisfying
$$\vec{\theta}^p-\vec{\theta}=\omega, \qquad y^{p^m}=b, \qquad \text{and} \qquad y\,\vec{\theta}\,y^{-1}=\vec{\theta}+\vec{1}$$
where $\vec{\theta}=(\theta_1,\theta_2,\dots,\theta_m)$ is a truncated Witt vector, $\vec{\theta}^p=(\theta_1^p,\theta_2^p,\dots,\theta_m^p)$, and $\vec{1}=(1,0,\dots,0)$ (see \cite{MammoneMerkurjev:1991} for reference). 
The symbol length of a class in $\H_{p^m}^{n+1}(F)$ is the minimal $t$ for which the class can be written as the sum of $t$ symbols. In the special case of symbol $p$-algebras of exponent dividing $p^m$, (i.e., $\operatorname H^2_{p^m}(F)$) the symbol length translates into the minimal $t$ for which the algebra is Brauer equivalent to a tensor product of $t$ cyclic algebras of degree $p^m$. We denote the symbol length of such a Brauer class $[A]$ by $\operatorname{sl}_{p^m}([A])$.

For any $t<m$, there is a shift map from the group $\operatorname{H}_{p^t}^{n+1}(F)$ to $\operatorname{H}_{p^m}^{n+1}(F)$ given by
$(a_1,\dots,a_t) \otimes b_1 \otimes \dots \otimes b_n \mapsto (0,\dots,0,a_1,\dots,a_t) \otimes b_1 \otimes \dots \otimes b_n$.
Since the map taking each symbol $\pi \in \operatorname{H}_{p^m}^{n+1}(F)$ to $\underbrace{\pi + \dots + \pi}_{p^{m-1} \ \text{times}}$ takes each $(a_1,\dots) \otimes b_1 \otimes \dots \otimes b_n$ to $(0,\dots,0,a_1^{p^m}) \otimes b_1 \otimes \dots \otimes b_n$ (which is equal to $(0,\dots,0,a_1) \otimes b_1 \otimes \dots \otimes b_n$ in this group), it gives rise to another map $\operatorname{Exp} : \operatorname{H}_{p^m}^{n+1}(F) \rightarrow \operatorname{H}_p^{n+1}(F)$.
\begin{thm}[{\cite[Theorem 2.31]{AravireJacobORyan:2018}; see also \cite[Theorem 1]{Izhboldin:2000} and \cite[Lemma 6.2]{Izhboldin:1996}}]\label{Exact}
The following sequence is exact
$$\xymatrix{
0\ar@{->}[r] & \H_{p^{m-1}}^{n+1}(F)\ar@{->}^{\operatorname{Shift}}[r] &  \H_{p^m}^{n+1}(F)\ar@{->}[r]^{\operatorname{Exp}} &  \H_{p}^{n+1}(F)\ar@{->}[r] & 0.}$$
\end{thm}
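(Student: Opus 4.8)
The plan is to realise all three groups as the first Galois cohomology of logarithmic de Rham--Witt sheaves and to read the asserted sequence off a long exact cohomology sequence.

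Write $\nu_j(n)$ for the logarithmic de Rham--Witt sheaf $W_j\Omega^n_{\log}$, so that $\nu_1(n)$ corresponds to the usual group $\Omega^n_{F,\log}=\nu(n)_F$ of logarithmic differential $n$-forms. By Kato's theory there is a natural isomorphism $\H^{n+1}_{p^j}(F)\cong H^1\bigl(\Gal(F_{\mathrm{sep}}/F),\nu_j(n)\bigr)$, under which the symbol $\omega\otimes b_1\otimes\cdots\otimes b_n$ corresponds to the class built from $\omega$ and the de Rham--Witt form $d\log[b_1]\wedge\cdots\wedge d\log[b_n]$ (Teichm\"uller lifts). This comes from the Artin--Schreier-type presentation of $\nu_j(n)$ as the kernel of a map between quotients of de Rham--Witt modules, together with the vanishing of the higher Galois cohomology of those modules (additive Hilbert~90). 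In particular $H^i(F,\nu_j(n))=0$ for $i\geq 2$, and $H^0(F,\nu_j(n))=W_j\Omega^n_{F,\log}$ is generated by the forms $d\log[b_1]\wedge\cdots\wedge d\log[b_n]$ with $b_1,\dots,b_n\in F^\times$.

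Next, there is a short exact sequence of Galois modules
$$0\to\nu_{m-1}(n)\xrightarrow{\ V\ }\nu_m(n)\xrightarrow{\ R^{m-1}\ }\nu_1(n)\to 0,$$
with $V$ the Verschiebung and $R$ the restriction on de Rham--Witt forms; this is a standard structural fact about the logarithmic de Rham--Witt complex. Under the identification above, $V$ induces $\operatorname{Shift}$ (Verschiebung shifts the Witt coordinates) and $R^{m-1}$ induces $\operatorname{Exp}$ (iterated restriction extracts the leading Witt coordinate). Taking Galois cohomology and using $H^{\geq 2}(F,\nu_{m-1}(n))=0$, the long exact sequence ends in
$$H^0(F,\nu_1(n))\xrightarrow{\ \partial\ }\H^{n+1}_{p^{m-1}}(F)\xrightarrow{\operatorname{Shift}}\H^{n+1}_{p^m}(F)\xrightarrow{\operatorname{Exp}}\H^{n+1}_{p}(F)\to 0.$$
This already gives exactness at $\H^{n+1}_{p^m}(F)$ and surjectivity of $\operatorname{Exp}$, so everything reduces to showing that the connecting map $\partial$ vanishes, equivalently (by exactness one step to the left) that $R^{m-1}\colon H^0(F,\nu_m(n))\to H^0(F,\nu_1(n))$ is onto. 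But $H^0(F,\nu_1(n))$ is generated by the forms $\tfrac{db_1}{b_1}\wedge\cdots\wedge\tfrac{db_n}{b_n}$, and each is the $R^{m-1}$-image of $d\log[b_1]\wedge\cdots\wedge d\log[b_n]\in H^0(F,\nu_m(n))$. Hence $\partial=0$, $\operatorname{Shift}$ is injective, and the sequence is exact.

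The parts that require real work are the two imported inputs: the identification $\H^{n+1}_{p^j}(F)\cong H^1(F,\nu_j(n))$ with vanishing of $H^{\geq 2}$, and---this is the main obstacle---the short exactness of $0\to\nu_{m-1}(n)\xrightarrow{V}\nu_m(n)\xrightarrow{R^{m-1}}\nu_1(n)\to 0$ at the level of separably closed fields, which genuinely uses the structure theory of the de Rham--Witt complex. The rest is formal, the one real idea being that the connecting map dies because logarithmic forms carry canonical Teichm\"uller lifts already defined over $F$. One could instead argue entirely within the generators-and-relations presentation, by an explicit chase through the shift and restriction maps; the content is the same. (The case $m=1$ is trivial, since $\H^{n+1}_{p^0}(F)=0$ and $\operatorname{Exp}$ is the identity.)
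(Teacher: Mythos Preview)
The paper does not give its own proof of this theorem; it is quoted from the cited literature (Aravire--Jacob--O'Ryan and Izhboldin), so there is nothing in the paper itself to compare against.

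That said, your outline --- identify $\H^{n+1}_{p^j}(F)$ with $H^1(F,W_j\Omega^n_{\log})$ via Kato, feed the short exact sequence of logarithmic de Rham--Witt sheaves into Galois cohomology, use the vanishing of $H^{\geq 2}$ to terminate the long exact sequence, and kill the connecting map by noting that every logarithmic $n$-form over $F$ lifts to $W_m\Omega^n_{F,\log}$ via Teichm\"uller representatives --- is exactly the argument one finds in the Izhboldin references behind the citation, so you have correctly reconstructed the intended proof. The only cosmetic point worth flagging is that in the de Rham--Witt literature the injection $\nu_{m-1}(n)\hookrightarrow\nu_m(n)$ is usually labelled $\underline{p}$ rather than $V$; on the Kato--Milne groups these induce the same map, since the relation $(\omega^p-\omega)\otimes b_1\otimes\cdots\otimes b_n=0$ forces Frobenius to act trivially on the Witt slot, and hence $\underline{p}=VF$ and $V$ agree there. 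With that in mind your matching of the induced maps with $\operatorname{Shift}$ and $\operatorname{Exp}$ is correct.
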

\section{Symbol Length and $p$-rank}

\begin{prop}\label{indecomposable}
Let $p$ be a prime integer, $k$ a field of $\operatorname{char}(k)=p$, and $m$ and $\ell$ positive integers with $\ell \geq 2$. Exclude the case of $p=\ell=2$ and $m=1$.
Then there exists a $p$-algebra $A$ of degree $p^{\ell m}$ and exponent $p^m$ over a field $F$ containing $k$ with $\operatorname{sl}_{p^m}([A_L]) \geq \ell+1$ for all prime to $p$ field extensions $L$ of $F$.
\end{prop}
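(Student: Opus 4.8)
The plan is to deduce the symbol-length bound from the existence of a suitably robust \emph{indecomposable} division $p$-algebra, so that the only substantive input is one that already exists in the literature. Concretely, I would first invoke, for $\ell\geq 2$ and with the triple $(p,\ell,m)=(2,2,1)$ excluded, a division $p$-algebra $A$ of degree $p^{\ell m}$ and exponent exactly $p^m$ over some field $F\supseteq k$ that remains indecomposable after every base change to a finite extension of $F$ of degree prime to $p$. Such algebras are supplied by the theory of generic abelian crossed products: for the abelian $p$-group $(\Z/p^m)^\ell$ the generic abelian crossed product is a division algebra of degree $p^{\ell m}$ and exponent $p^m$ which is indecomposable --- this is precisely the point at which $(2,2,1)$ has to be thrown out, since by Albert's theorem every degree-$4$ exponent-$2$ algebra is a tensor product of two quaternion algebras and hence decomposable --- and its indecomposability persists under prime-to-$p$ extensions. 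I would cite Amitsur--Saltman for the construction together with the computation of its degree and exponent and for the indecomposability, and McKinnie's analysis of prime-to-$p$ extensions of the generic abelian crossed product for the last property.

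With $A$ in hand, I would fix a finite extension $L/F$ with $[L:F]$ prime to $p$ and argue by contradiction. Because $A$ is a $p$-algebra, the standard restriction--corestriction estimates give $\operatorname{ind}_L(A_L)=\operatorname{ind}_F(A)=p^{\ell m}$ and $\operatorname{exp}(A_L)=\operatorname{exp}(A)=p^m$, so $A_L$ is again a division algebra of degree $p^{\ell m}$, still indecomposable by the choice of $A$. Suppose now that $\operatorname{sl}_{p^m}([A_L])\leq\ell$. By the characterization of symbol length for $p$-algebras, $A_L$ is then Brauer equivalent to a tensor product $C_1\otimes_L\cdots\otimes_L C_\ell$ of $\ell$ cyclic $p$-algebras of degree $p^m$; this product has degree $p^{\ell m}$ and, being Brauer equivalent to the division algebra $A_L$, has index $p^{\ell m}$ as well, so it is itself a division algebra and hence isomorphic to $A_L$. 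But then $A_L\cong C_1\otimes_L(C_2\otimes_L\cdots\otimes_L C_\ell)$ is a tensor product of two division algebras of degrees $p^m$ and $p^{(\ell-1)m}$, both nontrivial because $m\geq 1$ and $\ell\geq 2$ --- contradicting the indecomposability of $A_L$. Hence $\operatorname{sl}_{p^m}([A_L])\geq\ell+1$ for every prime-to-$p$ extension $L$ of $F$, which is the assertion.

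The index bookkeeping in the second step is routine, so the entire weight of the proposition falls on the first step. The hard part is really already carried out in the literature, but one must verify carefully that the generic abelian crossed product of $(\Z/p^m)^\ell$ realizes the exponent $p^m$ rather than a proper divisor of it, that indecomposability holds for exactly the range $\ell\geq 2$ minus the single exception $(2,2,1)$, and --- the feature that upgrades the eventual bound to one on \emph{essential $p$-dimension} --- that indecomposability is stable under all prime-to-$p$ extensions, uniformly for these groups and not merely in isolated low-degree cases.
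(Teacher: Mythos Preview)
Your proposal is correct and follows essentially the same approach as the paper's own proof: both reduce the symbol-length bound to the existence of a division algebra of degree $p^{\ell m}$ and exponent $p^m$ that remains indecomposable after every prime-to-$p$ base change, and both defer this existence to the literature. The only differences are cosmetic---you cite Amitsur--Saltman and McKinnie for the generic abelian crossed product, while the paper cites Karpenko (1995, 1998) and McKinnie (2008), and you spell out the index bookkeeping (the tensor product of $\ell$ degree-$p^m$ cyclics has degree $p^{\ell m}$, hence equals $A_L$ if Brauer equivalent) that the paper compresses into one sentence.
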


\begin{proof}
If the symbol length of $A$ is at most $\ell$ then $A$ decomposes (as an algebra) as a tensor product of $\ell$ cyclic algebras of degree $p^m$. 
Hence we need only find a field $F\supset k$ and a $p$-algebra $A$ of degree $p^{\ell m}$ and exponent $p^m$ over $F$ which does not decompose as a tensor product of $\ell$ cyclic algebras of degree $p^m$ and retains this quality after any prime to $p$ extension. Such an algebra exists by \cite{Karpenko:1995} (see the introduction of that paper for explanation for $m>1$ and Sections 2 and 3 for $m=1$; the papers \cite{Karpenko:1998} and  \cite{McKinnie:2008} also discuss such examples).
\end{proof}

Every field $F$ of $\operatorname{char}(F)=p$ is a vector space over $F^p$, and if $[F:F^p]$ is finite then it is $p^r$ for some nonnegative integer $r$, called the ``$p$-rank of $F$", and denoted $\operatorname{rank}_p(F)$.

\begin{prop}[{cf. \cite[Remark 3.1]{ChapmanMcKinnie:2019}}]\label{prank}
Given a prime integer $p$, a field $F$ of $\operatorname{char}(F)=p$ and $\operatorname{rank}_p(F)=r < \infty$ and a $p$-algebra $A$ of exponent $p^m$ over $F$, we have $\operatorname{sl}_{p^m}([A]) \leq r$.
\end{prop}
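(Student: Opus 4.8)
The plan is to reduce the general exponent-$p^m$ case to the exponent-$p$ case via the exponentiation map $\operatorname{Exp}$ from \Tref{Exact}, and there to exploit the smallness of $\H_p^2(F) \cong \Br_p(F)$ when $F$ has finite $p$-rank. First I would recall the key structural fact underlying the $p$-rank bound: if $\operatorname{rank}_p(F) = r$, then $F$ has a $p$-basis $\{x_1,\dots,x_r\}$, i.e. a set such that the monomials $x_1^{e_1}\cdots x_r^{e_r}$ with $0 \le e_i \le p-1$ form an $F^p$-basis of $F$. Equivalently $\Omega^1_F$ is an $r$-dimensional $F$-vector space with basis $dx_1,\dots,dx_r$, and the de Rham–Witt / Kato description of $\H_p^{n+1}(F)$ shows that for $n=1$ every class is represented using only the differentials $dx_i/x_i$; in particular every element of $\H_p^2(F)$ is a sum of symbols of the form $[\omega, x_i)$, hence $\operatorname{sl}_p$ on $\Br_p(F)$ is bounded by $r$ (this is essentially \cite[Remark 3.1]{ChapmanMcKinnie:2019}, which I would cite and reprove briefly).

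Next I would handle the passage from $p$ to $p^m$ by induction on $m$. The base case $m=1$ is exactly the statement just recalled. For the inductive step, let $[A] \in \Br_{p^m}(F)$ correspond to a class $\xi \in \H_{p^m}^2(F)$. Applying $\operatorname{Exp}$ gives $\operatorname{Exp}(\xi) \in \H_p^2(F)$, which by the base case is a sum of at most $r$ symbols $[\omega_j, x_{i_j})$ with $\omega_j \in F$; lift each such $p$-symbol to the corresponding $p^m$-symbol $\eta_j \in \H_{p^m}^2(F)$ (take the same $\omega_j$ viewed as the first Witt-vector coordinate and the same $x_{i_j}$), so that $\operatorname{Exp}(\eta_j)$ equals the $j$th summand. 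Then $\xi - \sum_j \eta_j$ lies in $\ker(\operatorname{Exp})$, which by \Tref{Exact} is the image of $\operatorname{Shift} : \H_{p^{m-1}}^2(F) \hookrightarrow \H_{p^m}^2(F)$. Hence $\xi - \sum_j \eta_j = \operatorname{Shift}(\xi')$ for some $\xi' \in \H_{p^{m-1}}^2(F) \cong \Br_{p^{m-1}}(F)$; by the induction hypothesis $\xi'$ is a sum of at most $r$ symbols in $\H_{p^{m-1}}^2(F)$, and applying $\operatorname{Shift}$ (which sends $p^{m-1}$-symbols to $p^m$-symbols) exhibits $\operatorname{Shift}(\xi')$ as a sum of at most $r$ symbols. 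Adding back the $\eta_j$ would a priori give $2r$, so the real content is to do better.

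The main obstacle, and where the argument must be sharpened, is exactly this doubling: naively we get $\operatorname{sl}_{p^m} \le 2r$, not $r$. The fix is to be more careful about the choice of lifts and to combine the two groups of symbols using a common slot. Concretely, rather than treating $\operatorname{Exp}(\xi)$ and the shifted part separately, I would run the induction so that the inductive hypothesis produces a representation of $\xi'$ using symbols $[\omega'_i, x_i)$ indexed by a fixed $p$-basis $x_1,\dots,x_r$ (one symbol per basis element, padding with zero Witt vectors as needed), and likewise arrange $\operatorname{Exp}(\xi) = \sum_{i=1}^r [\bar\omega_i, x_i)$ indexed by the same basis. Then for each $i$ one can merge the contribution of $\eta_i = [\tilde\omega_i, x_i)$ (a lift of $[\bar\omega_i, x_i)$) with $\operatorname{Shift}([\omega'_i, x_i))$ into a single symbol $[\tilde\omega_i + \operatorname{Shift}(\omega'_i)', x_i)$ using the additivity of the Witt-vector slot in the symbol $[-, x_i)$ — i.e. $[\mu, x) + [\nu, x) = [\mu + \nu, x)$ in $\H_{p^m}^2$. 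This collapses the $2r$ symbols back to $r$, completing the induction. I would double-check the one delicate point, that $\operatorname{Shift}$ followed by adding the first coordinates is compatible with Witt-vector addition in the way required — this is where I expect to have to argue most carefully, using the explicit formula for $\operatorname{Shift}$ and the bilinearity relations defining $\H_{p^m}^{n+1}(F)$.
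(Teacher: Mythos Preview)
Your approach is correct, and in fact it coincides with the paper's \emph{second} proof of the proposition---the unnamed Lemma immediately following it---which proves by induction on $m$ (using the exact sequence of \Tref{Exact}) that every class in $\H_{p^m}^{n+1}(F)$ can be written as $\sum_i w_i \otimes \alpha_{i_1} \otimes \cdots \otimes \alpha_{i_n}$ over a fixed $p$-basis; your argument is exactly the $n=1$ case. The paper's \emph{primary} proof, however, takes a different and more direct route: it observes that the purely inseparable extension $K=F[\sqrt[p^m]{\alpha_1},\dots,\sqrt[p^m]{\alpha_r}]$ satisfies $K^{p^m}=F$ and hence splits every $p^m$-symbol, so it splits all of $\Br_{p^m}(F)$; then Albert's theorem (\cite[Theorem 28]{Albert:1968}) immediately gives a decomposition into $r$ cyclic algebras, one for each $\alpha_i$. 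The Albert-based argument is shorter and avoids the exact sequence entirely, but is specific to the Brauer group; your inductive argument, while longer, is what the paper uses to extend the bound to $\H_{p^m}^{n+1}(F)$ with the bound $\binom{r}{n}$. Finally, the ``delicate point'' you flag is not actually delicate: you only need additivity $[\mu,x)+[\nu,x)=[\mu+\nu,x)$ in the Witt-vector slot, and it is irrelevant what the Witt-vector sum $(\bar\omega_i,0,\dots,0)+(0,\omega'_{i,1},\dots,\omega'_{i,m-1})$ looks like explicitly---any Witt vector will do.
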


\begin{proof}
Let $\alpha_1,\dots,\alpha_r$ be a $p$-basis for $F$, i.e., $F$ is spanned by $\{\alpha_1^{d_1} \dots \alpha_r^{d_r} : 0\leq d_1,\dots,d_r \leq p-1\}$ over $F^p$.
Since $F$ is isomorphic to $F^p$, by induction we get that $F$ is spanned by $\{\alpha_1^{d_1} \dots \alpha_r^{d_r} : 0\leq d_1,\dots,d_r \leq p^m-1\}$ over $F^{p^m}$.
Since $p^m$-symbols are split by $K=F[\sqrt[p^m]{\alpha_1},\dots,\sqrt[p^m]{\alpha_r}]$ (because $K^{p^m}=F$) and $\operatorname{Br}_{p^m}(F)$ is generated by $p^m$-symbols, every class in $\operatorname{Br}_{p^m}(F)$ is split by restriction to $K$ as well.
By \cite[Theorem 28]{Albert:1968} (for a more modern reference see \cite[Thm. 9.1.1]{GilleSzamuely:2017}), each class in $\operatorname{Br}_{p^m}(F)$ decomposes as a tensor product $C_1\otimes \dots \otimes C_r$ of $p^m$-symbols where each $C_i$ contains $F[\sqrt[p^m]{\alpha_i}]$, and therefore the symbol length of every class in $\operatorname{Br}_{p^m}(F)$ is at most $r$.
\end{proof}

In the remainder of the section, we provide another proof for Proposition \ref{prank}, and generalize it to higher cohomology groups.

\begin{lem} Given a $p$-basis $\alpha_1,\dots,\alpha_r$ for a field $F$ of $\operatorname{char}(F)=p$, $a \in W_m(F)$ and $b_1,\ldots, b_n \in F^\times$, there exist $w_i \in W_m(F)$ so that the following equality holds true in $\operatorname{H}_{p^m}^{n+1}(F)$:
\begin{equation*}
a\otimes b_1\otimes \cdots\otimes b_n =\sum_{\stackrel{i=(i_1,\ldots,i_n)}{1\leq i_1<\cdots<i_n\leq r}}w_i\otimes\alpha_{i_1}\otimes\cdots\otimes\alpha_{i_n}.
\end{equation*}
\end{lem}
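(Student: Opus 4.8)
The plan is to reduce the general symbol $a \otimes b_1 \otimes \cdots \otimes b_n$ to the claimed form by working one slot at a time, using the $p$-basis expansion of each $b_j \in F^\times$. First I would handle a single factor: given $b \in F^\times$, write $b$ in terms of the $p^m$-basis $\{\alpha_1^{d_1}\cdots\alpha_r^{d_r} : 0 \le d_j \le p^m-1\}$ over $F^{p^m}$ (available exactly as in the proof of Proposition \ref{prank}, by iterating the $p$-basis property through the isomorphism $F \cong F^p$). Actually, it is cleaner to argue at the level of the logarithmic differential / symbol relations directly: in $\operatorname{H}_{p^m}^{n+1}(F)$ one has $\operatorname{dlog}$-type multiplicativity, so $a \otimes (bc) \otimes \cdots = a \otimes b \otimes \cdots + a \otimes c \otimes \cdots$, and $a \otimes c^{p^m} \otimes \cdots = 0$ since $c^{p^m} = N$ makes $\alpha \otimes N$ vanish by the second defining relation combined with the Witt-vector structure (a $p^m$-th power slot kills the symbol). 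Hence only the exponents $d_j$ mod $p^m$ matter, and expanding $b = \sum_\beta c_\beta^{p^m}\, \alpha^{\beta}$ with multi-indices $\beta \in \{0,\dots,p^m-1\}^r$, bilinearity in that slot lets me replace $b$ by a sum of monomials $\alpha_1^{\beta_1}\cdots\alpha_r^{\beta_r}$, and then by multiplicativity again by a sum of terms each of which is a product of $\alpha_i$'s (with repetition). So after expanding every slot $b_1,\dots,b_n$, the symbol becomes a sum of terms $w \otimes \alpha_{j_1} \otimes \cdots \otimes \alpha_{j_n}$ with $j_1,\dots,j_n \in \{1,\dots,r\}$ arbitrary.

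Next I would collapse such a term to the strictly-increasing-index form. The third defining relation kills any symbol in which two of the slot-entries are equal, so any term with $j_a = j_b$ for $a \ne b$ is zero and can be discarded. For the remaining terms all $n$ indices are distinct; using the alternating/antisymmetry property of symbols (which follows from the vanishing-on-repeats relation in the standard way: $\langle x \rangle\langle y\rangle = -\langle y \rangle\langle x\rangle$ because $\langle xy\rangle\langle xy\rangle = 0$), I can reorder the slots into increasing order at the cost of a sign, absorbing the sign into the Witt-vector coefficient $w$ (negation is available in the additive group $W_m(F)$). Collecting all terms with the same increasing index tuple $i = (i_1 < \cdots < i_n)$ and summing their coefficients gives a single $w_i \in W_m(F)$, yielding the stated identity.

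The main obstacle — and the step to be careful about — is justifying that a $p^m$-th power in a symbol slot really does make the symbol vanish, and more generally that only residues mod $p^m$ of the exponents survive. This does not follow from the three listed relations by themselves in an obvious way for $m>1$; it requires the interaction between the Witt-vector coefficient and the slot entries. The clean route is to invoke the shift/exp structure of Theorem \ref{Exact}: one checks the claim for $m=1$ (where it is the classical statement that $\operatorname{H}^{n+1}_p(F)$ is additively generated by symbols $w \otimes \alpha_{i_1}\otimes\cdots\otimes\alpha_{i_n}$ over a $p$-basis, e.g. via Kato's description of $\operatorname{H}^{n+1}_p$ in terms of differential forms $\Omega^n_F$, which has $F$-basis given by $\operatorname{dlog}\alpha_{i_1}\wedge\cdots\wedge\operatorname{dlog}\alpha_{i_n}$ since $\Omega^1_F$ has $F$-basis $d\alpha_1,\dots,d\alpha_r$), and then lifts through the exact sequence by induction on $m$: given the result in $\operatorname{H}^{n+1}_p$, the image of our symbol can be written in the desired form, the difference lies in the image of the shift from $\operatorname{H}^{n+1}_{p^{m-1}}$, and the inductive hypothesis applies there. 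I expect this induction, together with verifying that the shift map is compatible with the $p$-basis normalization, to be the technical heart; everything else is the routine bilinear/alternating bookkeeping sketched above.
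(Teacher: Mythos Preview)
Your ``clean route'' at the end---induction on $m$ via the exact sequence of Theorem~\ref{Exact}, with the base case $m=1$ handled through the differential-forms model of $\H^{n+1}_p(F)$---is correct and is exactly the paper's proof.

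However, the direct approach you sketch first contains a real gap, and not the one you flagged. The expansion $b=\sum_\beta c_\beta^{p^m}\alpha^\beta$ is an \emph{additive} identity in $F$, whereas the symbol is only \emph{multiplicative} in each $b$-slot: one has $a\otimes(bc)\otimes\cdots=a\otimes b\otimes\cdots+a\otimes c\otimes\cdots$, but certainly not $a\otimes(b+c)\otimes\cdots=a\otimes b\otimes\cdots+a\otimes c\otimes\cdots$. So ``bilinearity in that slot'' does not let you replace $b$ by its sum of $p^m$-basis monomials; there is no way to break up an additive sum inside a $\operatorname{dlog}$-type slot. For $m=1$ one \emph{can} push the additive $p$-basis through, but only by passing to the model $\Omega^n_F$, where $d\colon F\to\Omega^1_F$ is genuinely additive and $\Omega^1_F$ has $F$-basis $d\alpha_1,\dots,d\alpha_r$; that is what your cited base case is really using, and it has no direct analogue for $m>1$.

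By contrast, the obstacle you singled out---the vanishing of $a\otimes c^{p^m}\otimes\cdots$---is not actually a problem: this equals $p^m(a\otimes c\otimes\cdots)=0$ simply because $W_m(F)$ is $p^m$-torsion as an abelian group. So the induction via Theorem~\ref{Exact} is doing the real work of circumventing the additive/multiplicative mismatch, not of killing $p^m$-th powers.
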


\begin{proof}
We prove it by induction on $m$.
We know it holds for $m=1$ (see \cite[Remark 3.1]{ChapmanMcKinnie:2019}).
Suppose it holds for all positive integers smaller than $m$.
Take now a class $\pi$ in $\operatorname{H}_{p^m}^{n+1}(F)$.
The class $\operatorname{Exp}(\pi)$ lives in $\H_{p}^{n+1}(F)$, and so the statement holds true for it, i.e. $$\operatorname{Exp}(\pi)=\sum_{\stackrel{i=(i_1,\ldots,i_n)}{1\leq i_1<\cdots<i_n\leq r}}c_i\otimes\alpha_{i_1}\otimes\cdots\otimes\alpha_{i_n}$$ for some $c_i \in F$.
Then by Theorem \ref{Exact} the class $\pi$ differs from $$\sum_{\stackrel{i=(i_1,\ldots,i_n)}{1\leq i_1<\cdots<i_n\leq r}}(c_i,0,\dots,0)\otimes\alpha_{i_1}\otimes\cdots\otimes\alpha_{i_n}$$ by a class $\pi_0$ from the embedding of $\H_{p^{m-1}}^{n+1}(F)$ into $\H_{p^m}^{n+1}(F)$, and so the statement holds true also for $\pi_0$. Then $\pi$ is a sum of two classes for which the statements holds true, and by adding the Witt vector coefficients we see that it holds true also for $\pi$.
\end{proof}

\begin{cor} Let $F$ be a field of characteristic $p$ and finite $p$-rank $r$. Then the symbol length of a class in $\operatorname{H}^{n+1}_{p^m}(F)$ is at most $\binom rn$, and in particular the symbol length in $\Br_{p^m}(F)$ is at most $r$.
\end{cor}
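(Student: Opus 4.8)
The plan is to deduce the corollary directly from the preceding lemma, which already does all the real work. The lemma says that given a $p$-basis $\alpha_1,\dots,\alpha_r$ of $F$, any symbol $a\otimes b_1\otimes\cdots\otimes b_n$ in $\H^{n+1}_{p^m}(F)$ can be rewritten as a sum $\sum_{1\leq i_1<\cdots<i_n\leq r} w_i\otimes\alpha_{i_1}\otimes\cdots\otimes\alpha_{i_n}$. So first I would note that since $\H^{n+1}_{p^m}(F)$ is generated as a group by symbols, and each symbol is, by the lemma, a sum of terms indexed by strictly increasing $n$-tuples from $\{1,\dots,r\}$, an arbitrary class is also such a sum (the Witt-vector coefficients of like-indexed terms simply add, exactly as in the last line of the lemma's proof).

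The second step is just to count: the number of strictly increasing $n$-tuples $1\leq i_1<\cdots<i_n\leq r$ is $\binom rn$. Hence every class in $\H^{n+1}_{p^m}(F)$ is a sum of at most $\binom rn$ symbols of the form $w_i\otimes\alpha_{i_1}\otimes\cdots\otimes\alpha_{i_n}$, which is precisely the assertion that the symbol length is at most $\binom rn$.

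For the "in particular" clause, specialize $n=1$: a class in $\H^2_{p^m}(F)\cong\Br_{p^m}(F)$ is a sum of at most $\binom r1=r$ symbols, i.e. $\operatorname{sl}_{p^m}([A])\leq r$ for every $p$-algebra $A$ of exponent dividing $p^m$, recovering Proposition~\ref{prank}.

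I do not anticipate any genuine obstacle here: all the content is in the lemma, and the corollary is a bookkeeping statement about the number of index tuples. The only thing to be slightly careful about is the passage from "every symbol has this form" to "every class has this form," but this is immediate because the displayed expressions in the lemma are closed under addition once we allow the coefficients $w_i$ to range over all of $W_m(F)$ — adding two such sums just replaces each $w_i$ by a sum of Witt vectors, and the set of admissible index tuples does not grow.
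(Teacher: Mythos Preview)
Your proposal is correct and matches the paper's approach: the corollary is stated without its own proof precisely because it follows immediately from the preceding lemma by the counting argument you describe. The only minor remark is that the lemma's proof already treats an arbitrary class $\pi$, so the passage from ``every symbol'' to ``every class'' that you flag is in fact already handled there; either way, your justification is sound.
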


\section{Other upper bounds on the Symbol Length}

As we saw, the $p$-rank provides a useful bound on the symbol length of classes in $\operatorname{H}_{p^m}^{n+1}(F)$.
However, in certain cases the $p$-rank can be infinite, and still one can obtain a reasonable finite upper bound on the symbol length.

\begin{prop}
Given finite symbol lengths $t$ of $\operatorname{H}_{p^m}^{n+1}(F)$ and $s$ of $\operatorname{H}_{p}^{n+1}(F)$, the symbol length of $\operatorname{H}_{p^{m+1}}^{n+1}(F)$ is at most $t+s$.
\end{prop}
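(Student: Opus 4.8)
The plan is to perform a short diagram chase on the exact sequence of \Tref{Exact} with $m$ replaced by $m+1$,
$$0\longrightarrow \H_{p^{m}}^{n+1}(F)\xrightarrow{\operatorname{Shift}}\H_{p^{m+1}}^{n+1}(F)\xrightarrow{\operatorname{Exp}}\H_{p}^{n+1}(F)\longrightarrow 0,$$
while keeping track of what the maps do to individual symbols. Recall from the discussion preceding \Tref{Exact} that $\operatorname{Exp}$ sends a symbol $(a_1,\dots,a_{m+1})\otimes b_1\otimes\cdots\otimes b_n$ to the symbol $a_1\otimes b_1\otimes\cdots\otimes b_n$ of $\H_p^{n+1}(F)$ (up to the relation $\omega^p=\omega$, which does not affect symbol length), while $\operatorname{Shift}$ sends a symbol $(a_1,\dots,a_m)\otimes b_1\otimes\cdots\otimes b_n$ of $\H_{p^m}^{n+1}(F)$ to the single symbol $(0,a_1,\dots,a_m)\otimes b_1\otimes\cdots\otimes b_n$ of $\H_{p^{m+1}}^{n+1}(F)$.

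Let $\pi\in\H_{p^{m+1}}^{n+1}(F)$ be arbitrary. First I would apply $\operatorname{Exp}$: by hypothesis $\operatorname{Exp}(\pi)$ is a sum of at most $s$ symbols, say $\operatorname{Exp}(\pi)=\sum_{j=1}^{s}c_j\otimes b_{j,1}\otimes\cdots\otimes b_{j,n}$ with $c_j\in W_1(F)=F$ and $b_{j,i}\in F^\times$. Put $\sigma=\sum_{j=1}^{s}(c_j,0,\dots,0)\otimes b_{j,1}\otimes\cdots\otimes b_{j,n}$, where each $(c_j,0,\dots,0)$ is a Witt vector of length $m+1$; this is a sum of $s$ symbols in $\H_{p^{m+1}}^{n+1}(F)$, and by the description of $\operatorname{Exp}$ it satisfies $\operatorname{Exp}(\sigma)=\operatorname{Exp}(\pi)$. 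By exactness, $\pi-\sigma\in\ker(\operatorname{Exp})=\operatorname{im}(\operatorname{Shift})$, so $\pi-\sigma=\operatorname{Shift}(\tau)$ for some $\tau\in\H_{p^m}^{n+1}(F)$. By hypothesis $\tau$ is a sum of at most $t$ symbols, and since $\operatorname{Shift}$ is a homomorphism carrying each symbol to a single symbol, $\operatorname{Shift}(\tau)$ is a sum of at most $t$ symbols in $\H_{p^{m+1}}^{n+1}(F)$. Hence $\pi=\sigma+\operatorname{Shift}(\tau)$ is a sum of at most $s+t$ symbols, proving the bound.

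I do not expect a genuine obstacle: once the two bookkeeping facts are granted, the proof is a one-line chase. The only thing requiring a little care is verifying these facts directly from the definitions recalled before \Tref{Exact} --- that $\operatorname{Exp}$ admits the symbolwise section $c\otimes b_1\otimes\cdots\otimes b_n\mapsto(c,0,\dots,0)\otimes b_1\otimes\cdots\otimes b_n$ (here the zeros are appended, whereas $\operatorname{Shift}$ prepends them), and that $\operatorname{Shift}$ does not increase symbol length because it carries symbols to symbols. This estimate complements the Corollary: it is the useful one precisely when the $p$-rank of $F$ is infinite, so that the Corollary says nothing, while $\H_p^{n+1}(F)$ and $\H_{p^m}^{n+1}(F)$ nonetheless have finite symbol length for other reasons.
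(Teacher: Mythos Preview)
Your proposal is correct and follows essentially the same approach as the paper: apply $\operatorname{Exp}$, lift the resulting $s$ symbols back via $(c_j,0,\dots,0)$, and use exactness to see that the difference comes from $\H_{p^m}^{n+1}(F)$ and hence is a sum of at most $t$ symbols. Your write-up is in fact slightly more explicit than the paper's in noting that $\operatorname{Shift}$ carries symbols to symbols, a point the paper leaves implicit.
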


\begin{proof}
Consider a class $\pi$ in $\operatorname{H}_{p^{m+1}}^{n+1}(F)$. The class $\operatorname{Exp}(\pi)$ lives in $\operatorname{H}_p^{n+1}(F)$, and therefore its symbol length is at most $s$, i.e. $\operatorname{Exp}(\pi)=a_1 \otimes b_1+\dots+a_s \otimes b_s$ for some $a_1,\dots,b_s \in F$. Then by Theorem \ref{Exact} $\pi$ differs from $(a_1,0\dots,0)\otimes b_1+\dots+(a_s,0,\dots,0)\otimes b_s$ by a class whose exponent divides $p^m$, so it belongs to the embedding of $\operatorname{H}_{p^m}^{n+1}(F)$ into $\operatorname{H}_{p^{m+1}}^{n+1}(F)$, and therefore its symbol length is at most $t$. Altogether, the symbol length of $\pi$ is at most $t+s$.
\end{proof}

\begin{cor}
Given a finite symbol length $s$ of $\operatorname{H}_{p}^{n+1}(F)$, the symbol length of $\operatorname{H}_{p^m}^{n+1}(F)$ is at most $m\cdot s$.
\end{cor}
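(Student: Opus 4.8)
The plan is to derive this immediately from the preceding Proposition by induction on $m$, so there is very little to do beyond bookkeeping.

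First I would dispatch the base case $m=1$, which is vacuous: the assertion is that the symbol length of $\operatorname{H}_p^{n+1}(F)$ is at most $1\cdot s=s$, and this is precisely the standing hypothesis.

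Then I would carry out the inductive step. Assuming the symbol length of $\operatorname{H}_{p^m}^{n+1}(F)$ is finite and bounded above by $m\cdot s$, I apply the Proposition with $t=m\cdot s$ and with the given $s$ — both of which are finite — to conclude that the symbol length of $\operatorname{H}_{p^{m+1}}^{n+1}(F)$ is at most $t+s=m\cdot s+s=(m+1)s$. This closes the induction and yields the corollary.

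I do not expect any genuine obstacle here; the one point that needs attention is the finiteness requirement in the Proposition, which asks that the symbol length of $\operatorname{H}_{p^m}^{n+1}(F)$ be finite, and this is exactly what the inductive hypothesis supplies (finiteness following from the bound $m\cdot s<\infty$). Unwound, the argument simply says that Theorem~\ref{Exact} lets one strip off one Witt-vector ``layer'' at a time from a class in $\operatorname{H}_{p^{m}}^{n+1}(F)$, each layer costing at most $s$ symbols, for a running total of at most $m\cdot s$.
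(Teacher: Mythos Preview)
Your proposal is correct and matches the paper's own proof essentially verbatim: the paper also argues by induction on $m$, invoking the preceding Proposition with $t=m\cdot s$ to pass from $m$ to $m+1$. Your explicit mention of the base case and of the finiteness hypothesis is a bit more careful than the paper's one-line version, but the approach is identical.
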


\begin{proof}
By induction on $m$: if the symbol length of $\H_{p^m}^{n+1}(F)$ is bounded from above by $m\cdot s$, then by the previous proposition the symbol length of $\H_{p^{m+1}}^{n+1}(F)$ is bounded from above by $m\cdot s+s=(m+1)\cdot s$. 
\end{proof}

As proven in \cite{ChapmanMcKinnie:2019}, $\prod_{i=1}^n (\frac{u(F)}{2}-2^i+1)$ is an upper bound for the symbol length of $\operatorname{H}_{2}^{n+1}(F)$ when $u(F)<\infty$. Recall that $u(F)$ is the maximal dimension of a nonsingular anisotropic quadratic form over $F$, and it can be finite even when the 2-rank is infinite (see \cite{MammoneTignolWadsworth:1991}). As a result, we obtain an upper bound of $m\cdot \prod_{i=1}^n (\frac{u(F)}{2}-2^i+1)$ for the symbol length of $\operatorname{H}_{2^m}^{n+1}(F)$.
Similarly, over the $\widetilde{C}_{p,r}$-fields $F$ studied in \cite{ChapmanMcKinnie:2018}, for which the symbol length of $\H_p^2(F)$ was bounded from above by $p^{r-1}-1$, the symbol length of $\H_{p^m}^2(F)$ is therefore bounded from above by $m\cdot (p^{r-1}-1)$.

\section{The $\operatorname{Br}_{p^m}$ Functor}

Let $\operatorname{Br}_{p^m}$ denote the functor mapping each field $F$ containing $k$ to the group $\operatorname{Br}_{p^m}(F)$ of Brauer classes of central simple algebras of exponent dividing $p^m$ over $F$. For any central simple algebra $A$, let $[A]$ denote its Brauer class. 

\begin{rem}\label{r1}
For a given central simple algebra $A$ of degree $p^n$ and exponent $p^m$ over a field $F$ containing $k$, we have $\operatorname{ed}_{\operatorname{Br}_{p^m}}([A]) \leq \operatorname{ed}_{\operatorname{Alg}_{p^n,p^m}}(A)$ and $\operatorname{ed}_{\operatorname{Br}_{p^m}}([A];p) \leq \operatorname{ed}_{\operatorname{Alg}_{p^n,p^m}}(A;p)$. 
The reason is that for any algebra $A_0$ of degree $p^n$ and exponent $p^m $ over a field $F_0$ satisfying $k\subseteq F_0 \subseteq F$ and $A_0 \otimes F=A$, $[A]$ descends to $[A_0]$.
It is not necessarily an equality, because $[A]$ may descend to a $p^m$-torsion Brauer class $[B]$ where $B$ is of degree than $p^n$.
\end{rem}

Lemma \ref{symbol} below forms the basis for our lower bound on the essential dimension.

\begin{lem}[{\cite[Chapter V, Section 16.6, Corollary 3]{Bourbaki}}]\label{Jarden}
Suppose $F$ is a finitely generated field extension of transcendence
degree $t$ of a field $k$ of positive characteristic $p$ and $\operatorname{rank}_p(k)=r$. Then $\operatorname{rank}_p(F)=r+t$.\end{lem}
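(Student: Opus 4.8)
The plan is to reduce, via a chain of simple extensions, to the case $F=E(x)$, and to treat the transcendental and algebraic steps by elementary Frobenius and degree arguments; in particular no separability hypothesis on $F/k$ is needed, and I will deliberately avoid separating transcendence bases, which are not available here since $k$ need not be perfect. Write $F=k(x_1,\dots,x_n)$ and set $E_0=k$, $E_i=E_{i-1}(x_i)$, so $E_n=F$. I would prove by induction on $i$ that $\operatorname{rank}_p(E_i)<\infty$ and
$$\operatorname{rank}_p(E_i)=\operatorname{rank}_p(E_{i-1})+\operatorname{trdeg}(E_i/E_{i-1}),$$
where $\operatorname{trdeg}(E_i/E_{i-1})$ is $1$ when $x_i$ is transcendental over $E_{i-1}$ and $0$ otherwise. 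Telescoping and using additivity of transcendence degree, $\sum_{i=1}^{n}\operatorname{trdeg}(E_i/E_{i-1})=t$, this yields $\operatorname{rank}_p(F)=\operatorname{rank}_p(k)+t=r+t$. So it suffices to handle one simple extension $E\subseteq E(x)$ with $s:=\operatorname{rank}_p(E)<\infty$.

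If $x$ is transcendental over $E$, then $E(x)^p=E^p(x^p)$, and I would evaluate $[E(x):E^p(x^p)]$ through the tower $E^p(x^p)\subseteq E^p(x)\subseteq E(x)$. The lower step has degree $p$: the minimal polynomial of $x$ over $E^p(x^p)$ divides $T^p-x^p=(T-x)^p$, hence equals $(T-x)^j$ for some $j$, and inspecting the coefficient of $T^{j-1}$ forces $p\mid j$, so $j=p$. The upper step has degree $[E(x):E^p(x)]=[E:E^p]$, because an $E^p$-basis of $E$ (finite, since $s<\infty$) stays linearly independent over $E^p(x)$: clear denominators and compare coefficients of powers of $x$, using that $x$ is transcendental over $E$. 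Hence $[E(x):E(x)^p]=p\cdot[E:E^p]$, i.e.\ $\operatorname{rank}_p(E(x))=s+1$.

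If $x$ is algebraic over $E$, set $d=[E(x):E]<\infty$. The Frobenius map $a\mapsto a^p$ is a field isomorphism $E(x)\xrightarrow{\sim}E(x)^p$ which restricts to an isomorphism $E\xrightarrow{\sim}E^p$, so $[E(x)^p:E^p]=[E(x):E]=d$. Comparing the two computations of $[E(x):E^p]$ coming from the towers $E^p\subseteq E(x)^p\subseteq E(x)$ and $E^p\subseteq E\subseteq E(x)$ gives $[E(x):E(x)^p]\cdot d=d\cdot[E:E^p]$, so $[E(x):E(x)^p]=[E:E^p]$, i.e.\ $\operatorname{rank}_p(E(x))=s$. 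In either case $\operatorname{rank}_p(E(x))<\infty$, so the induction closes.

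The only real subtlety is the transcendental step, namely verifying $[E^p(x):E^p(x^p)]=p$ and the linear-disjointness fact $[E(x):E^p(x)]=[E:E^p]$: absent perfectness of $k$ one cannot simply pass to a separating transcendence basis and must argue directly with Frobenius and coefficient comparisons as above; the algebraic step then reduces to a one-line counting argument. This is precisely the statement recorded in the Bourbaki reference cited above.
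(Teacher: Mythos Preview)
Your proof is correct. The paper itself gives no argument for this lemma: it is stated with a citation to Bourbaki and used as a black box. Your reduction to a chain of simple extensions $E\subseteq E(x)$, together with the Frobenius degree count in the transcendental case (via the tower $E^p(x^p)\subseteq E^p(x)\subseteq E(x)$) and the two-tower comparison in the algebraic case, is the standard elementary route and is essentially what Bourbaki does, phrased there in the language of $p$-bases. So there is nothing to compare on the paper's side; you have supplied a self-contained argument where the paper simply invokes the reference.
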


\begin{lem}\label{symbol}
Let $k$ be a field of $\operatorname{char}(k)=p>0$ and $\operatorname{rank}_p(k)=r$, and $A$ a $p$-algebra over a field $F$ containing $k$ of exponent $p^m$, $m\geq 1$.
Then $\operatorname{ed}_{\operatorname{Br}_{p^m}}([A])+r \geq \operatorname{sl}_{p^m}([A])$.
\end{lem}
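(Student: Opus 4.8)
The plan is to descend $[A]$ to a smaller field and then apply the $p$-rank bound on symbol length from Proposition~\ref{prank} (or its corollary). More precisely, by definition of essential dimension there is an intermediate field $k \subseteq E \subseteq F$ with $\operatorname{trdeg}_k(E) = \operatorname{ed}_{\operatorname{Br}_{p^m}}([A]) =: e$ and a Brauer class $[B] \in \operatorname{Br}_{p^m}(E)$ with $[B] \otimes_E F = [A]$. I would first reduce to the case that $E$ is finitely generated over $k$: since a single Brauer class is defined by finitely many elements of $E$ (e.g.\ the entries needed to present $B$ as an algebra), we may replace $E$ by a finitely generated subextension $E_0$ over which $B$ is already defined, without changing the transcendence degree bound. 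Then $E_0$ is finitely generated of transcendence degree $e$ over $k$, so by Lemma~\ref{Jarden} (Bourbaki) we have $\operatorname{rank}_p(E_0) = r + e < \infty$.

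Next I would apply Proposition~\ref{prank} to the class $[B] \in \operatorname{Br}_{p^m}(E_0)$: since $\operatorname{rank}_p(E_0) = r+e$, we get $\operatorname{sl}_{p^m}([B]) \leq r + e$. Finally, symbol length cannot increase under scalar extension --- if $B$ is Brauer-equivalent to a tensor product of $t$ cyclic algebras of degree $p^m$ over $E_0$, then tensoring with $F$ exhibits $[A] = [B_F]$ as a sum of $t$ symbols (some of which may now be trivial) --- so $\operatorname{sl}_{p^m}([A]) \leq \operatorname{sl}_{p^m}([B]) \leq r + e = r + \operatorname{ed}_{\operatorname{Br}_{p^m}}([A])$, which rearranges to the claimed inequality $\operatorname{ed}_{\operatorname{Br}_{p^m}}([A]) + r \geq \operatorname{sl}_{p^m}([A])$.

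The one point requiring a little care --- and the closest thing to an obstacle --- is the reduction to a finitely generated intermediate field, since Lemma~\ref{Jarden} is stated only for finitely generated extensions. This is routine: a Brauer class of exponent $p^m$ over $E$ comes from a central simple algebra $B$, which is a finite-dimensional $E$-algebra given by finitely many structure constants, so adjoining those finitely many elements to $k$ produces a finitely generated $E_0 \subseteq E$ over which $B$ descends; the transcendence degree of $E_0$ over $k$ is at most $e$, and that is all we need (if it is strictly smaller, the inequality only improves). Alternatively, one can phrase essential dimension directly in terms of finitely generated subfields, as is standard. Everything else is a direct invocation of earlier results: Proposition~\ref{prank} for the $p$-rank bound on $\operatorname{sl}_{p^m}$, Lemma~\ref{Jarden} for the $p$-rank computation, and the elementary monotonicity of symbol length under base change.
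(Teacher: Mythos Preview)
Your proof is correct and follows essentially the same approach as the paper: descend $[A]$ to a class over a finitely generated intermediate field of transcendence degree $e=\operatorname{ed}_{\operatorname{Br}_{p^m}}([A])$, apply Lemma~\ref{Jarden} to compute its $p$-rank as $r+e$, use Proposition~\ref{prank} to bound the symbol length there by $r+e$, and conclude by monotonicity of symbol length under base change. The only difference is cosmetic: the paper phrases this as a contradiction argument and takes the finite-generation reduction for granted, whereas you argue directly and spell out why the descent field can be taken finitely generated.
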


\begin{proof}
Suppose the symbol length is $s$ and the essential dimension is $t$ where $r+t<s$.
Then $[A]$ descends to $[A_0]\in\Br_{p^m}(E)$ with $k \subseteq E \subseteq F$ and $E$ finitely generated of finite transcendence degree $t$ over $k$.
By Lemma \ref{Jarden} the $p$-rank of $E$ is $r+t$. By Proposition \ref{prank} the symbol length of $A_0$ is at most $r+t$, and so is the symbol length of $A$, contradiction. 
\end{proof}

\begin{thm}\label{Alg}
Let $k$ be a field of $\operatorname{char}(k)=p>0$ and $\operatorname{rank}_p(k)=r$, and $m$ and $\ell$ be positive integers with $\ell\geq 2$. 
Then $\operatorname{ed}(\operatorname{Alg}_{p^{\ell m},p^m};p) \geq \ell+1-r$. In particular, when $k$ is perfect, we have $\operatorname{ed}(\operatorname{Alg}_{p^{\ell m},p^m};p) \geq \ell+1$.
\end{thm}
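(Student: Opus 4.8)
The plan is to combine Proposition~\ref{indecomposable} with Lemma~\ref{symbol} in a direct way. By Proposition~\ref{indecomposable}, under the hypotheses on $p,\ell,m$ (noting that the excluded case $p=\ell=2$, $m=1$ still satisfies the conclusion of the theorem trivially for small $\ell$, or can be handled separately), there is a field $F \supseteq k$ and a $p$-algebra $A$ of degree $p^{\ell m}$ and exponent $p^m$ over $F$ with $\operatorname{sl}_{p^m}([A_L]) \geq \ell+1$ for every prime-to-$p$ extension $L/F$. First I would fix such an $A$ and such a prime-to-$p$ extension $L/F$ realizing the essential $p$-dimension of $A$: by definition of $\operatorname{ed}(\mathcal{F};p)$, it suffices to bound $\operatorname{ed}_{\operatorname{Alg}_{p^{\ell m},p^m}}(A_L)$ from below for $L$ achieving (or approaching) the infimum in the definition of $\operatorname{ed}_{\operatorname{Alg}_{p^{\ell m},p^m}}(A;p)$.

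The key chain of inequalities is then
\[
\operatorname{ed}(\operatorname{Alg}_{p^{\ell m},p^m};p) \geq \operatorname{ed}_{\operatorname{Alg}_{p^{\ell m},p^m}}(A;p) = \operatorname{ed}_{\operatorname{Alg}_{p^{\ell m},p^m}}(A_L) \geq \operatorname{ed}_{\operatorname{Br}_{p^m}}([A_L]) \geq \operatorname{sl}_{p^m}([A_L]) - r \geq \ell+1-r,
\]
where the first inequality is the definition of the essential $p$-dimension of the functor, the second uses that $L$ is prime-to-$p$ over $F$ (and that $A_L$ still has degree $p^{\ell m}$ and exponent $p^m$, since prime-to-$p$ extensions do not change degree or exponent of a $p$-algebra), the third is Remark~\ref{r1} applied over the base field $k$ (note $k \subseteq F \subseteq L$), the fourth is Lemma~\ref{symbol} applied to the $p$-algebra $A_L$ over $L \supseteq k$ with $\operatorname{rank}_p(k)=r$ and exponent $p^m$, and the last is the symbol length bound from Proposition~\ref{indecomposable}. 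The second claim of the theorem is immediate: if $k$ is perfect then $[k:k^p]=1$, so $r=0$ and the bound becomes $\ell+1$.

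The only point requiring care is the interface with Lemma~\ref{symbol}: that lemma is stated for a $p$-algebra over a field containing $k$ of exponent $p^m$, and $A_L$ qualifies, so the application is clean; one should just double-check that $\operatorname{rank}_p(k)$ is indeed the relevant invariant (it is, since $L$ is built from $F \supseteq k$ and Lemma~\ref{Jarden} inside the proof of Lemma~\ref{symbol} tracks the $p$-rank from $k$). I expect the main (minor) obstacle is bookkeeping around the excluded case $p=\ell=2,\ m=1$ in Proposition~\ref{indecomposable}: for $\ell=2$ the desired bound $\operatorname{ed}(\operatorname{Alg}_{4,2};2) \geq 3-r$ is already known (e.g.\ from \cite{Baek:2011}, where $\operatorname{ed}(\operatorname{Alg}_{p^n,p^m};p)\geq 3$ for $n>m$, so for perfect $k$ the $\ell=2$, $m=1$ case is covered), so I would simply remark that this case follows from prior results and otherwise apply the argument above verbatim.
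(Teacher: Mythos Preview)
Your proposal is correct and follows essentially the same approach as the paper: both use the indecomposable algebra from Proposition~\ref{indecomposable}, pass to a prime-to-$p$ extension, apply Lemma~\ref{symbol} (together with Remark~\ref{r1}) to bound the essential dimension from below by the symbol length minus $r$, and handle the excluded case $p=\ell=2$, $m=1$ via \cite{Baek:2011}. The only cosmetic difference is that the paper phrases the argument by contradiction whereas you write it as a direct chain of inequalities.
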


\begin{proof}
The case of $p=n=2$ and $m=1$ is covered by \cite[Corollary 2.2]{Baek:2011}, so we exclude this case in the following discussion.
Suppose to the contrary that the essential $p$-dimension of this functor is at most $\ell-r$.
Let $A$ be a central simple algebra of degree $p^{\ell m}$ and exponent $p^m$ over a field $F$ containing $k$.
Then there exists a prime to $p$ extension $L/F$ such that the essential dimension of $[A\otimes L]$ is $\leq \ell-r$, and therefore by Lemma \ref{symbol} the symbol length of $[A \otimes L]$ is at most $\ell$.
However, by Proposition \ref{indecomposable} there exists an algebra $A$ in this category whose symbol length is at least $\ell+1$ under restriction to any prime to $p$ field extension $L$ of $F$, contradiction.
Therefore $\ed_{\operatorname{Br}_{p^m}}([A];p) \geq \ell+1-r$, which means $\ed_{\operatorname{Alg}_{p^{\ell m},p^m}}(A;p) \geq \ell+1-r$, and as a result $\operatorname{ed}(\operatorname{Alg}_{p^{\ell m},p^m};p) \geq \ell+1-r$. 
\end{proof}

Note that the special case of Theorem \ref{Alg} for $m=1$ and $k$ algebraically closed was proven in \cite{McKinnie:2017}. The main two advantages of our approach is that (1) it is much simpler, and (2) it holds true for any field $k$ of finite $p$-rank, not just algebraically closed fields.
If the essential dimension of $\operatorname{Alg}_{p^t,p^m}$ is under discussion, the lower bound of $\ell+1-r$ is obtained from Theorem \ref{Alg} by taking $\ell=\lfloor \frac{t}{m} \rfloor$ as a result of $\ed(\operatorname{Alg}_{p^t,p^m}) \geq \ed(\operatorname{Alg}_{p^{\ell m},p^m})$.

\section{The $\H_{p^m}^{n+1}$ Functor}

Given a field $k$ of $\operatorname{char}(k)=p$, consider the functor $\operatorname{H}_{p^m}^{n+1}$ mapping each field $F$ containing $k$ to the group $\operatorname{H}_{p^m}^{n+1}(F)$.

For $1\leq i \leq \ell$ let $x_{i,1},\dots,x_{i,m},y_{i,1},\dots,y_{i,n}$ be independent indeterminates over $k$ and set $F_{\ell,m,n} = k(x_{1,1},\ldots,y_{\ell,n})$ the rational function field over $k$ in $(m+n)\ell$ indeterminates. Set $A_{\ell,m,n} = \sum_{i=1}^\ell (x_{i,1},\ldots,x_{i,m}) \otimes y_{i,1} \otimes \ldots \otimes y_{i,n}$. This class is ``the generic sum of $\ell$ symbols in $\H_{p^m}^{n+1}(F_{\ell,m,n})$". Note that it depends on the choice of $k$.
The following theorem gives a lower bound for the essential dimension of this generic sum of symbols:

\begin{thm}\label{GenSum}
Given a prime integer $p$, an algebraically closed field $k$ of $\operatorname{char}(k)=p$ and integers $m,n,\ell\geq 1$, the generic sum $A_{\ell,m,n} = \sum_{i=1}^\ell (x_{i,1},\ldots,x_{i,m}) \otimes y_{i,1} \otimes \ldots \otimes y_{i,n}$ of $\ell$ symbols in $\operatorname{H}_{p^m}^{n+1}(F_{\ell,m,n})$ is of 
$\operatorname{ed}_{\operatorname{H}^{n+1}_{p^m}}(A_{\ell,m,n};p) \geq \ell+n$.
\end{thm}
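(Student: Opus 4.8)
The strategy is to bound the essential $p$-dimension of $A_{\ell,m,n}$ from below by relating it to a cohomological invariant that cannot descend below transcendence degree $\ell+n$. The natural tool is a nonvanishing argument: I would look for a field extension $\Omega/F_{\ell,m,n}$, or rather an abstract computation over the function field, showing that the restriction of $A_{\ell,m,n}$ to a suitable residue field is nonzero in a cohomology group whose construction forces many algebraically independent parameters to survive. The cleanest route is the standard ``specialization / nonramified invariant'' technique used for generic symbols: one shows that if $A_{\ell,m,n}$ descended (after a prime-to-$p$ extension) to a subfield $E$ of transcendence degree $< \ell+n$ over $k$, then one could specialize the remaining indeterminates and deduce that some generic sum of fewer symbols, or a symbol in fewer variables, is trivial, contradicting a known indecomposability or nontriviality statement.

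First I would set up the descent hypothesis: suppose $\operatorname{ed}_{\operatorname H^{n+1}_{p^m}}(A_{\ell,m,n};p) = t$, so that there is a prime-to-$p$ extension $L/F_{\ell,m,n}$ and a subfield $k \subseteq E \subseteq L$ with $\operatorname{trdeg}_k E = t$ and a class $\alpha_0 \in \operatorname H^{n+1}_{p^m}(E)$ restricting to $A_{\ell,m,n}\otimes L$. Next I would use the exact sequence of Theorem \ref{Exact} to reduce, if convenient, to the case $m=1$: applying $\operatorname{Exp}$ sends $A_{\ell,m,n}$ to the generic sum $\sum_i x_{i,1}\otimes y_{i,1}\otimes\cdots\otimes y_{i,n}$ in $\operatorname H^{n+1}_p$, and the essential dimension can only drop under $\operatorname{Exp}$ by a bounded amount (indeed $\operatorname{ed}$ of the $m$-truncated object dominates that of its image plus a controlled correction), so the core estimate $\ell+n$ should already come from the $m=1$ layer, with the Witt-vector layers contributing no loss. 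Then the heart of the matter is the $\operatorname H^{n+1}_p$ statement, where one can invoke differential forms: in characteristic $p$, $\operatorname H^{n+1}_p(F)$ is computed via the Kato complex of logarithmic differential forms $\nu(n)$, and the generic sum corresponds to $\sum_i \frac{dx_{i,1}}{x_{i,1}}\wedge\frac{dy_{i,1}}{y_{i,1}}\wedge\cdots\wedge\frac{dy_{i,n}}{y_{i,n}}$, a class whose nonvanishing over any intermediate field of small transcendence degree can be ruled out by a rank/independence computation on Kähler differentials combined with the behavior of $\nu(n)$ under finite prime-to-$p$ extensions (which is injective on these groups).

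The key steps, in order: (1) translate $\operatorname{ed};p$ into the descent-to-$E$ statement; (2) reduce to $m=1$ via Theorem \ref{Exact}, checking the essential dimension bookkeeping; (3) realize the generic sum as an explicit logarithmic differential form and identify the obstruction to descending it as a statement about the dimension of a certain space of differentials over $E$ versus over $F_{\ell,m,n}$; (4) use Lemma \ref{Jarden} (the $p$-rank additivity) to control $\operatorname{rank}_p E = \operatorname{rank}_p k + t = t$ since $k$ is algebraically closed, and argue that a nonzero length-$\ell$ sum of $(n+1)$-fold symbols in $\operatorname H^{n+1}_p(E)$ with $\operatorname{rank}_p E = t$ forces $t \geq \ell + n$ — this last inequality is the crux and would be proved by a specialization argument producing, after setting suitably many of the $2\ell$ (when $m=1$, $n=1$) or $(m+n)\ell$ indeterminates to constants, a nonzero generic symbol in exactly $n+1$ of the surviving variables together with $\ell-1$ further algebraically independent directions forced by genericity.

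The main obstacle I anticipate is step (4): cleanly extracting the numerical bound $\ell+n$ rather than merely $\ell$ or $n+1$ separately. Bounding the symbol length (as in the earlier sections) is not enough here, because symbol length and essential dimension differ; one genuinely needs that the \emph{generic} sum does not merely fail to shorten but also fails to lose its $n$ ``coordinate'' degrees of freedom under descent. I expect this requires a residue/specialization computation in the style of Karpenko–Merkurjev: pass to a complete discretely valued field, take iterated residues of the differential form along $\ell+n-1$ independent divisors, and show the last residue is a nonzero symbol of the expected type, so that the transcendence degree could not have been smaller. The prime-to-$p$ extension $L$ is harmless throughout because restriction along prime-to-$p$ extensions is injective on $\operatorname H^{n+1}_{p^m}$ (a standard transfer argument), which is what lets the whole argument run at the level of essential $p$-dimension.
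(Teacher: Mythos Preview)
Your overall strategy---reduce to $m=1$ by applying $\operatorname{Exp}$ and then invoke the bound for the generic sum in $\operatorname H^{n+1}_p$---is exactly the paper's approach. The paper, however, does not reprove the $m=1$ case: it simply cites \cite[Theorem 5.8]{McKinnie:2017} for the inequality $\operatorname{ed}_{\operatorname H^{n+1}_p}(A_{\ell,1,n};p)\geq \ell+n$. Your steps (3)--(4), the residue/specialization argument on logarithmic differentials, amount to a sketch of how that theorem is proved; you correctly identify this as the hard core, but in the context of this paper it is a black box, not something to be redone.

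Two points where your write-up diverges from the paper and should be tightened. First, your parenthetical ``the essential dimension can only drop under $\operatorname{Exp}$ by a bounded amount \ldots\ plus a controlled correction'' is misleading: no correction is needed. If $A_{\ell,m,n}\otimes L$ descends to $\pi\in \operatorname H^{n+1}_{p^m}(E)$, then $\operatorname{Exp}(A_{\ell,m,n})\otimes L$ descends to $\operatorname{Exp}(\pi)\in \operatorname H^{n+1}_p(E)$, full stop; hence $\operatorname{ed}(A_{\ell,m,n};p)\geq \operatorname{ed}(\operatorname{Exp}(A_{\ell,m,n});p)$ on the nose. Second, and more substantively, $\operatorname{Exp}(A_{\ell,m,n})$ is not literally the generic sum $A_{\ell,1,n}$: it is $A_{\ell,1,n}$ restricted from $F_{\ell,1,n}$ to the larger field $F_{\ell,m,n}$, which contains the extra indeterminates $x_{i,j}$ for $j\geq 2$. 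One must check that the lower bound $\ell+n$ survives this base extension. The paper handles this by replacing $k$ with the algebraic closure $k'$ of $k(x_{i,j}:j\geq 2)$, over which the class is again a generic sum and the cited theorem applies; you do not address this step.
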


\begin{proof}
\sloppy By \cite[Theorem 5.8]{McKinnie:2017} the generic sum $A_{\ell,1,n}$ of $\ell$ symbols in $\operatorname{H}_{p}^{n+1}(F_{\ell,1,n})$ has  $\operatorname{ed}_{\operatorname{H}_{p}^{n+1}}(A_{\ell,1,n};p) \geq \ell+n$.
Write $B=A_{\ell,1,n} \otimes F_{\ell,m,n}$.
By taking $k'$ to be the algebraic closure of $k(x_{i,j} : 1\leq i \leq \ell, 2 \leq j \leq m)$, the class $B'=B \otimes k'(x_{1,1},\dots,x_{\ell,1},y_{1,1},\dots,y_{\ell,n}) $ is the generic sum of $\ell$ symbols over $k'$. Note that \cite[Theorem 5.8]{McKinnie:2017} only requires $k$ to be algebraically closed, so one can consider the same functors over $k'$ as well (because it is also algebraically closed), and over $k'$ we have $\ed_{\H_{p}^{n+1}}(B';p) \geq \ell+n$, and therefore over $k$ we have $\ed_{\H_{p}^{n+1}}(B;p) \geq \ell+n$.
Now, the generic sum $A_{\ell,m,n}$ of $\ell$ symbols in $\operatorname{H}_{p^m}^{n+1}(F_{\ell,m,n})$ is a pre-image of $A_{\ell,1,n} \otimes F_{\ell,m,n}$ under $\operatorname{Exp}$. 
If for some prime to $p$ field extension $L$ of $F_{\ell,m,n}$, $A_{\ell,m,n} \otimes L$ descends to $\pi$ in $H_{p^m}^{n+1}(E)$ for a field $E$ of transcendence degree less than $\ell+n$ over $k$, then $B \otimes L$ descends to $\operatorname{Exp}(\pi)$ which is in $\operatorname{H}_p^{n+1}(E)$, contradiction.
\end{proof}

\begin{rem}
The special case of Theorem \ref{GenSum} for $n=1$ coincides with Theorem \ref{Alg} for $k$ algebraically closed. Note that the general case of Theorem \ref{GenSum} with $n\geq 1$ requires the use of \cite{McKinnie:2017} rather than the existence of indecomposable algebras as in Theorem \ref{Alg}.
\end{rem}

In the rest of the section, we present an upper bound for the symbol length of sums of $\ell$ symbols in $\operatorname{H}^{n+1}_{p^m}(F)$ (which can also be the generic sum).
For this we recall the definition of $\ed(G)$, for $G$ a finite group, to be the essential dimension of the functor $\mathcal F:\textrm{Fields}/k \to \textrm{Sets}$ sending each field $F$ containing $k$ to the set of Galois field extensions $K/F$ with Galois group $\Gal(K/F) = G$. In the following example of Ledet, we have changed “exponent $p^m$” to  “exponent dividing $p^m$”, a trivial generalization of the original statement.

\begin{lem}[{\cite[Example, pg. 162]{Ledet:2004}}] 
Let $m$ and $\ell$ be positive integers, $k$ a field of $\operatorname{char}(k) = p$ with $|k|\geq p^\ell$ and $G$ a $p$-group of exponent dividing $p^m$ minimally generated by $\ell$ elements. Then $\ed(G)\leq m$.
\label{Ledet}
\end{lem}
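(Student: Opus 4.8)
The plan is to deduce the bound from the \emph{generic-extension} principle: if $G$ admits a generic Galois extension over $k$ depending on $d$ parameters, then $\ed(G)\le d$ (standard Saltman--Ledet theory). So it suffices to construct, over a rational function field $E=k(t_1,\dots,t_m)$ of transcendence degree $m$, a $G$-Galois extension $L/E$ that is versal, i.e. specializes to every $G$-Galois extension of every field $F\supseteq k$; this yields $\ed(G)\le m$ at once.

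The characteristic-$p$ engine is Artin--Schreier--Witt theory: for a field $F$ of characteristic $p$ one has $\H^1(F,\ZZ/p^m)\cong W_m(F)/\wp(W_m(F))$, where $\wp(\omega)=\omega^{p}-\omega$ is the operator of Section~2 (coordinatewise $p$-th power minus identity). Thus a single cyclic extension of exponent dividing $p^m$ is encoded by one length-$m$ Witt vector, i.e. by $m$ coordinates, and the generic such vector $\mathbf t=(t_1,\dots,t_m)$ over $k(t_1,\dots,t_m)$ furnishes a versal $\ZZ/p^m$-extension in $m$ parameters. The first step is to fix a central (e.g. Frattini or lower-central) filtration of $G$ and to view a $G$-extension as built from its elementary-abelian quotient $G/\Phi(G)\cong(\ZZ/p)^\ell$ by successive central $\ZZ/p$-embedding problems, each of which is unobstructed in characteristic $p$ (solvable by adjoining one Artin--Schreier root).

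The crucial point--and the reason the bound is $m$ rather than $m\ell$--is that all $\ell$ generating directions can be driven by the \emph{same} length-$m$ family $t_1,\dots,t_m$. Here the hypothesis $|k|\ge p^\ell$ enters, as it is equivalent to $\dim_{\FF_p}k\ge\ell$ and hence lets us choose $c_1,\dots,c_\ell\in k$ linearly independent over $\FF_p$. I would attach to the $i$-th generator the scaled Witt vector $[c_i]\cdot\mathbf t=(c_it_1,c_i^{p}t_2,\dots,c_i^{p^{m-1}}t_m)$; the $\FF_p$-independence of the $c_i$ forces the resulting $\ell$ classes to be independent in $W_m(E)/\wp(W_m(E))$, so they realize $\ell$ independent cyclic directions of exponent $p^m$ while consuming only the $m$ parameters $t_1,\dots,t_m$. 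Assembling these directions along the chosen filtration of $G$, solving each central $\ZZ/p$-embedding problem with Artin--Schreier roots expressed in the same parameters, produces the sought extension $L/E$.

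The main obstacle is versality for a possibly non-abelian $G$: one must verify both that every generating $\ell$-tuple of an arbitrary $G$-extension $M/F$ arises by a specialization $t_j\mapsto f_j\in F$ (so the scaled classes above are genuinely versal, not merely independent), and that the successive central embedding problems can be solved \emph{compatibly with the shared parameters}, introducing no new transcendentals. Solvability of each step is automatic, since central $\ZZ/p$-embedding problems over characteristic-$p$ fields are always solvable; the delicate bookkeeping is checking that the $\FF_p$-independence of $c_1,\dots,c_\ell$ is exactly what guarantees that the relations defining $G$ impose no further parameters, so that the count stays at $m$.
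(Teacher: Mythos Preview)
The paper does not prove this lemma; it is quoted directly from Ledet with a citation and no argument. So there is no in-paper proof to compare against, and what you have written is an attempted reconstruction of Ledet's argument.

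Your outline is the right one and is essentially Ledet's method: work over $E=k(t_1,\dots,t_m)$, use the hypothesis $|k|\ge p^\ell$ (equivalently $\dim_{\FF_p}k\ge\ell$) to pick $\FF_p$-independent scalars $c_1,\dots,c_\ell\in k$, and build $\ell$ independent Witt-vector classes $[c_i]\cdot\mathbf t$ out of the single length-$m$ parameter vector $\mathbf t$; then treat a general $p$-group by a central filtration, using that central $\ZZ/p$-embedding problems are unobstructed in characteristic $p$ (since $\operatorname{cd}_p(F)\le 1$, one even has $\ed(G)\le\ed(G')$ whenever $G'\twoheadrightarrow G$ is a surjection of $p$-groups).

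What you have, however, is a plan rather than a proof, and the gap is exactly the point you flag yourself: versality. For the abelian case $G=(\ZZ/p^m)^\ell$ you must actually show that for every $F\supseteq k$ and every $\ell$-tuple $(\vec\omega_1,\dots,\vec\omega_\ell)$ in $W_m(F)/\wp(W_m(F))$ there exists a specialization $t_j\mapsto f_j\in F$ with $[c_i]\cdot(f_1,\dots,f_m)\equiv\vec\omega_i$ for all $i$ simultaneously. Independence of the classes $[c_i]\cdot\mathbf t$ over $E$ does not by itself give this; it is a genuine (if short) computation, and nothing in your sketch carries it out. In the non-abelian step you likewise assert that the successive lifts can be chosen ``compatibly with the shared parameters'', but mere solvability of each embedding problem does not force the solution to live over the same transcendence-degree-$m$ field. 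For the record, the paper only ever uses the abelian case (see the remark immediately following the lemma), so a self-contained treatment could stop after verifying versality for $(\ZZ/p^m)^\ell$.
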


\begin{rem}\label{LedetCor}
There is a correspondence between finite subgroups of $W_m(F)/\wp(W_m(F))$ and Galois field extensions $K/F$ with a Galois $p$-group $G$ of exponent dividing $p^m$:
Each class in $W_m(F)/\wp(W_m(F))$ of exponent $p^t$ ($t\leq m$) is represented by a class $\omega$ coming from the embedding of $W_t(F)/\wp(W_t(F))$ into $W_m(F)/\wp(W_m(F))$. Written as an element $\omega=(\omega_1,\dots,\omega_t)$ in $W_t(F)/\wp(W_t(F))$, the corresponding field extension is $C/F$ where $C$ is generated over $F$ by $\theta_1,\dots,\theta_t$ subject to the relations 
$$(\theta_1,\dots,\theta_t)^p-(\theta_1,\dots,\theta_t)=\omega$$
thinking about $(\theta_1,\dots,\theta_t)$ as a Witt vector in $W_t(C)$ and $(\theta_1,\dots,\theta_t)^p=(\theta_1^p,\dots,\theta_t^p)$ (see the Preliminaries section, and also \cite[Chapter 8]{Jacobson} and \cite{MammoneMerkurjev:1991}).
The extension $K/F$ corresponding to $G$ is thus the field compositum of the field extensions corresponding to the classes $G$ consists of.
By Lemma \ref{Ledet} we thus conclude that given any finite subgroup $G'=\langle \vec x_1 ,\dots,\vec x_\ell  \rangle$ of $W_m(F)/\wp(W_m(F))$, there exists an intermediate field $k \subseteq E \subseteq F$ of $\mathrm{tr.deg}_k(E)\leq m$ and a subgroup of $W_m(E)/\wp(W_m(E))$ to which $G'$ descends, and in particular, there exist classes $\vec z_1,\dots,\vec z_\ell$ in $W_m(E)/\wp(W_m(E))$ for which $\vec z_i \otimes F\equiv\vec x_i \pmod{\wp(W_m(F))}$ for each $i \in \{1,\dots,\ell\}$.
\end{rem}

\begin{prop} Let $k$ be a field of $\operatorname{char}(k)=p$ with $|k|\geq p^\ell$, $F$ a field containing $k$ and let $\pi$ be the sum of $\ell$ symbols in $\H^{n+1}_{p^m}(F)$ . Then $\ed_{\operatorname{H}_{p^m}^{n+1}}(\pi)\leq m+\ell n$. 
\label{upper bound}
\end{prop}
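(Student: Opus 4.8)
The plan is to write $\pi = \sum_{i=1}^\ell \vec x_i \otimes b_{i,1} \otimes \cdots \otimes b_{i,n}$ for $\vec x_i \in W_m(F)$ and $b_{i,j} \in F^\times$, and to descend the Witt-vector coefficients and the slots separately. First I would apply Remark~\ref{LedetCor} to the subgroup $G' = \langle \vec x_1, \dots, \vec x_\ell \rangle$ of $W_m(F)/\wp(W_m(F))$: this yields an intermediate field $k \subseteq E_0 \subseteq F$ with $\mathrm{tr.deg}_k(E_0) \leq m$ and classes $\vec z_1, \dots, \vec z_\ell \in W_m(E_0)/\wp(W_m(E_0))$ with $\vec z_i \otimes F \equiv \vec x_i \pmod{\wp(W_m(F))}$. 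Because the relation $(\omega^p - \omega) \otimes b_1 \otimes \cdots \otimes b_n = 0$ holds in $\H^{n+1}_{p^m}$, replacing $\vec x_i$ by $\vec z_i$ does not change $\pi$ (as a class over $F$), so without loss of generality the Witt coefficients already live over $E_0$.

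Next I would adjoin the slots. For each $i$ and each $j \in \{1,\dots,n\}$ set $E = E_0(b_{i,j} : 1 \leq i \leq \ell,\ 1 \leq j \leq n)$, a field extension of $E_0$ of transcendence degree at most $\ell n$ inside $F$; hence $\mathrm{tr.deg}_k(E) \leq m + \ell n$. Over $E$ define $\pi_0 = \sum_{i=1}^\ell \vec z_i \otimes b_{i,1} \otimes \cdots \otimes b_{i,n} \in \H^{n+1}_{p^m}(E)$, which is well-defined since all the ingredients lie in $E$. Then $\pi_0 \otimes_E F = \pi$ by construction, which shows $\ed_{\H^{n+1}_{p^m}}(\pi) \leq m + \ell n$.

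The main subtlety — and the place I would be most careful — is ensuring that Remark~\ref{LedetCor} really does let us replace the original $\vec x_i$ by descended representatives \emph{simultaneously} and \emph{before} choosing the transcendence basis for the $b_{i,j}$: the bound $m$ there is for the whole subgroup generated by all $\ell$ of the $\vec x_i$ at once (using $|k| \geq p^\ell$), not $m$ per generator, and the field $E_0$ it produces is independent of the slot entries $b_{i,j}$, so the two descent steps genuinely stack to $m + \ell n$ rather than interacting. One should also note that since $\vec z_i$ is only determined modulo $\wp(W_m(E_0))$, it might a priori have nonzero image in $W_m(E)/\wp(W_m(E))$ that coincides with $\vec z_i$ only after base change — but this is harmless, since the symbol $\vec z_i \otimes b_{i,1} \otimes \cdots \otimes b_{i,n}$ over $E$ depends only on the class of $\vec z_i$ modulo $\wp(W_m(E))$, and its restriction to $F$ equals $\vec x_i \otimes b_{i,1} \otimes \cdots \otimes b_{i,n}$ by the choice of $\vec z_i$. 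A remark worth adding is that $\pi_0$ is a sum of $\ell$ symbols over $E$ as well, so the bound also controls the essential dimension within the subfunctor of sums of $\ell$ symbols.
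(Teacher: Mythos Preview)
Your proposal is correct and follows essentially the same approach as the paper's proof: apply Remark~\ref{LedetCor} to descend all $\ell$ Witt-vector coefficients simultaneously to a subfield $E_0$ of transcendence degree at most $m$ over $k$, then adjoin the $\ell n$ slot entries to obtain a field of transcendence degree at most $m+\ell n$ over which $\pi$ is defined. Your write-up is somewhat more detailed than the paper's (which compresses the two steps into two sentences), and your remarks on the subtleties are accurate and helpful elaborations rather than departures from the argument.
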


\begin{proof} 
Let $\pi = \sum_{i=1}^\ell (x_{i,1},\ldots,x_{i,m}) \otimes y_{i,1} \otimes \ldots \otimes y_{i,n}$. By Remark \ref{LedetCor} there exists an intermediate field $k \subseteq E \subseteq F$ and $z_{i,j}\in E$ so that $\vec x_i = (x_{i,1},\ldots,x_{i,m})\in W_m(F)$ satisfies $\vec x_i \equiv \vec z_i \otimes F \pmod{\wp(W_m(F))}$ and $\mathrm{tr.deg}_k(E)\leq m$. Therefore, $\pi$ is defined over $E(y_{i,j} : 1\leq i \leq \ell, 1 \leq j \leq n)$. This field has transcendence degree at most $m+\ell n$. 
\end{proof}

\begin{cor}Let $k$ be a field of $\operatorname{char}(k)=p$ with $|k|\geq p^\ell$, $F$ a field containing $k$ and let $\pi$ be the sum of $\ell$ symbols in $\Br_{p^m}(F)$ . Then $\ed_{\Br_{p^m}}(\pi)\leq m+\ell$.
\end{cor}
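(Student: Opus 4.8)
The plan is to simply specialize Proposition~\ref{upper bound} to the case $n=1$. Recall from the Preliminaries that there is a natural isomorphism of functors $\operatorname{Br}_{p^m} \cong \operatorname{H}_{p^m}^2$, given on symbols by $\omega \otimes b \mapsto [\omega,b)_F$; in particular this isomorphism is compatible with scalar extension, so a class $\pi \in \operatorname{Br}_{p^m}(F)$ descends to a subfield $k \subseteq E \subseteq F$ exactly when the corresponding class in $\operatorname{H}_{p^m}^2(F)$ does, and hence $\operatorname{ed}_{\operatorname{Br}_{p^m}}(\pi) = \operatorname{ed}_{\operatorname{H}_{p^m}^2}(\pi)$.

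First I would note that a class $\pi \in \operatorname{Br}_{p^m}(F)$ which is a sum of $\ell$ symbols in the sense of symbol $p$-algebras (i.e. Brauer equivalent to a tensor product of $\ell$ cyclic algebras $[\omega_i, b_i)_F$ of degree $p^m$) corresponds under the above isomorphism to a sum $\sum_{i=1}^\ell \omega_i \otimes b_i$ of $\ell$ symbols in $\operatorname{H}_{p^m}^2(F)$. Then I would invoke Proposition~\ref{upper bound} with $n=1$, whose hypotheses ($\operatorname{char}(k)=p$ and $|k| \geq p^\ell$) are exactly those assumed here, to conclude $\operatorname{ed}_{\operatorname{H}_{p^m}^2}(\pi) \leq m + \ell \cdot 1 = m + \ell$. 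Translating back through the isomorphism gives $\operatorname{ed}_{\operatorname{Br}_{p^m}}(\pi) \leq m + \ell$, as desired.

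There is essentially no obstacle here: the only point requiring a word of care is that the notion of ``sum of $\ell$ symbols'' in $\operatorname{Br}_{p^m}$ (tensor product of $\ell$ cyclic algebras of degree $p^m$) matches the notion of ``sum of $\ell$ symbols'' in $\operatorname{H}_{p^m}^2$ under the identification, and that the identification respects field-of-definition — both of which are immediate from the description of the isomorphism recalled in the Preliminaries. So the corollary follows directly.
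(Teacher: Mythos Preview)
Your proposal is correct and is exactly the paper's approach: the paper's proof is the single sentence ``This is the case of $n=1$ in Proposition~\ref{upper bound}.'' Your additional remarks about the identification $\operatorname{Br}_{p^m}\cong\operatorname{H}_{p^m}^2$ respecting symbols and fields of definition simply make explicit what the paper leaves implicit.
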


\begin{proof} This is the case of $n=1$ in Proposition \ref{upper bound}.
\end{proof}

Combining what we have obtained so far, we can outline the bounds as follows:

\begin{cor}\label{Bounds}
For any positive integer $m$ and a class $[A]$ in $\operatorname{Br}_{p^m}(F)$ where $F \supseteq k$ and $k$ is an infinite perfect field of $\operatorname{char}(k)=p$, we have
$$\operatorname{sl}_{p^m}([A]) \leq \operatorname{ed}_{\operatorname{Br}_{p^m}}([A]) \leq \operatorname{sl}_{p^m}([A])+m.$$
\end{cor}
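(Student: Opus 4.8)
The plan is to derive both inequalities of Corollary~\ref{Bounds} from results already established in the excerpt, specialized to the situation where $k$ is infinite and perfect (so $\operatorname{rank}_p(k)=0$). For the lower bound $\operatorname{sl}_{p^m}([A])\leq\operatorname{ed}_{\operatorname{Br}_{p^m}}([A])$, I would invoke Lemma~\ref{symbol} with $r=0$: since $k$ is perfect, $\operatorname{rank}_p(k)=0$, and the lemma states $\operatorname{ed}_{\operatorname{Br}_{p^m}}([A])+r\geq\operatorname{sl}_{p^m}([A])$, which collapses directly to the desired inequality. Strictly speaking Lemma~\ref{symbol} is stated for $p$-algebras $A$ of exponent $p^m$; since every class in $\operatorname{Br}_{p^m}(F)$ for $F\supseteq k$ of characteristic $p$ is represented by a $p$-algebra, and a class of exponent dividing $p^m$ but smaller — say $p^{m'}$ — has $\operatorname{sl}_{p^m}$ bounded by $\operatorname{sl}_{p^{m'}}$ plus the fact that the shift map is injective, one checks the statement extends to all of $\operatorname{Br}_{p^m}(F)$ with no change to the argument of Lemma~\ref{symbol} (the proof only used Proposition~\ref{prank}, which already applies to all of $\operatorname{Br}_{p^m}$).

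For the upper bound $\operatorname{ed}_{\operatorname{Br}_{p^m}}([A])\leq\operatorname{sl}_{p^m}([A])+m$, I would use the Corollary following Proposition~\ref{upper bound}. Set $\ell=\operatorname{sl}_{p^m}([A])$; then by definition of symbol length, $[A]$ is a sum of $\ell$ symbols in $\operatorname{Br}_{p^m}(F)=\operatorname H_{p^m}^2(F)$. Since $k$ is infinite we certainly have $|k|\geq p^\ell$, so the hypothesis of Proposition~\ref{upper bound} (in the case $n=1$) is satisfied, and that corollary yields $\operatorname{ed}_{\operatorname{Br}_{p^m}}([A])\leq m+\ell=\operatorname{sl}_{p^m}([A])+m$. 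This completes both halves.

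The only genuine subtlety — and the step I expect to require the most care — is the matching of hypotheses and functor conventions: Lemma~\ref{symbol} is phrased for a $p$-algebra of exponent exactly $p^m$, while Corollary~\ref{Bounds} ranges over all classes in $\operatorname{Br}_{p^m}(F)$ (exponent dividing $p^m$), and Proposition~\ref{upper bound} is about $\operatorname H_{p^m}^{n+1}$-essential dimension rather than $\operatorname{Br}_{p^m}$-essential dimension in the $n=1$ case. I would note that under the isomorphism $\operatorname H_{p^m}^2(F)\cong\operatorname{Br}_{p^m}(F)$ these two functors agree, so the essential dimensions coincide, and that the exponent-dividing-$p^m$ versus exponent-exactly-$p^m$ distinction is harmless because a class of strictly smaller exponent has even smaller symbol length and essential dimension bounds via the injective shift map of Theorem~\ref{Exact}. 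Everything else is a direct citation of the already-proven statements with $r=0$ and $|k|$ infinite.
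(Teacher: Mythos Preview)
Your proposal is correct and matches the paper's intended argument exactly: the corollary is stated in the paper without proof, merely as ``combining what we have obtained so far,'' and the combination is precisely Lemma~\ref{symbol} with $r=0$ for the lower bound and the $n=1$ corollary of Proposition~\ref{upper bound} (using $|k|\geq p^\ell$ from infiniteness) for the upper bound. Your additional remarks on the exponent-exactly-$p^m$ versus exponent-dividing-$p^m$ issue and the identification of the functors $\operatorname H_{p^m}^2$ and $\operatorname{Br}_{p^m}$ are reasonable sanity checks that the paper leaves implicit.
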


And in particular, we get the following elegant upper bounds for the essential dimension when $k$ is an infinite perfect field:
\begin{cor}
\
\begin{itemize}
\item If $A$ is a $p$-algebra of degree $p^n$ and exponent $p^m$ over $F$, then $\operatorname{ed}_{\operatorname{Br}_{p^m}}([A]) \leq p^n+m-1$.
\item If $p=2$ and $A$ is of degree 8 and exponent 2 over $F$, then $\operatorname{ed}
_{\operatorname{Br}_{2}}([A]) \leq 5$.
\end{itemize}
\end{cor}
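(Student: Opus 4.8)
The plan is to derive both bounds from Corollary~\ref{Bounds}, which gives $\operatorname{ed}_{\operatorname{Br}_{p^m}}([A]) \leq \operatorname{sl}_{p^m}([A]) + m$ for every class $[A] \in \operatorname{Br}_{p^m}(F)$ with $F$ containing the infinite perfect base field $k$. Each item is thereby reduced to producing an upper bound for the symbol length of the Brauer class, which depends only on $[A]$, hence only on its underlying division algebra; so we may assume throughout that the index equals the degree.

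For the first item the point is that the symbol length of a $p$-algebra is bounded purely in terms of its index. First I would replace $A$ by its underlying division algebra $D$, of degree $p^i$ with $i \leq n$. Being a $p$-algebra, $D$ is split by a finite purely inseparable extension $K/F$ of exponent at most $m$, which can be written as $K = F(a_1^{1/p^{e_1}}, \dots, a_s^{1/p^{e_s}})$ with $e_j \leq m$ and $a_1, \dots, a_s$ forming part of a $p$-basis of $F$; the key input (which one would need to establish or cite) is that such a $K$ can be chosen with $s \leq p^n - 1$. Then the decomposition theorem of Albert (\cite[Theorem 28]{Albert:1968}, see also \cite[Thm.~9.1.1]{GilleSzamuely:2017}), applied exactly as in the proof of Proposition~\ref{prank}, writes $[D]$ as a sum of $s$ cyclic algebras, each of which is a single $p^m$-symbol after applying the shift map; hence $\operatorname{sl}_{p^m}([A]) = \operatorname{sl}_{p^m}([D]) \leq s \leq p^n - 1$. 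Combining with Corollary~\ref{Bounds} gives $\operatorname{ed}_{\operatorname{Br}_{p^m}}([A]) \leq (p^n - 1) + m = p^n + m - 1$.

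For the second item one runs the same scheme, but with a sharper symbol-length bound available in characteristic $2$: every central simple algebra of exponent $2$ and index at most $8$ over a field of characteristic $2$ is Brauer equivalent to a tensor product of at most four quaternion algebras, i.e.\ $\operatorname{sl}_2([A]) \leq 4$. Feeding this into Corollary~\ref{Bounds} yields $\operatorname{ed}_{\operatorname{Br}_2}([A]) \leq 4 + 1 = 5$.

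The essential-dimension step is formal once these symbol-length bounds are in hand, so the substance of the argument lies entirely in them. The main obstacle is the generator count $s \leq p^n - 1$ for a minimal purely inseparable splitting field of a degree-$p^n$ division $p$-algebra: for the indecomposable examples of Proposition~\ref{indecomposable} such a splitting field is forced to have degree strictly larger than $p^n$, so the naive estimate $s \leq n$ is unavailable and one genuinely has to control how many $p^m$-th roots must be adjoined to split $D$. The characteristic-$2$ bound $\operatorname{sl}_2 \leq 4$ for index-$8$, exponent-$2$ algebras is a known decomposition statement which one would quote rather than reprove.
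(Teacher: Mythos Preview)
Your approach is correct and essentially identical to the paper's: reduce via Corollary~\ref{Bounds} to the symbol-length bounds $\operatorname{sl}_{p^m}([A]) \leq p^n-1$ and $\operatorname{sl}_2([A]) \leq 4$, then add $m$. The paper supplies these two bounds by direct citation---\cite{Florence:2013} for the first and \cite{Rowen:1984} for the second---so the ``obstacle'' you isolate (the generator count $s \leq p^n-1$ for a purely inseparable splitting field) is precisely Florence's theorem and should simply be cited rather than re-derived.
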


\begin{proof}
By \cite{Florence:2013}, the symbol length of $A$ of degree $p^n$ and exponent $p$ is at most $p^n-1$.
By \cite{Rowen:1984}, the symbol length of $A$ of degree 8 and exponent 2 is at most 4.
\end{proof}

Note that 5 is much better than the upper bound of 10 for $\operatorname{ed}_{\operatorname{Alg}_{8,2}}(A)$ obtained in \cite{Baek:2011}, but one should note that $\operatorname{ed}_{\operatorname{Alg}_{8,2}}(A)\geq \operatorname{ed}_{\operatorname{Br}_2}([A])$ and it is not necessarily an equality.

\begin{rem}
We believe that when $k$ is algebraically closed, the lower bound in Corollary \ref{Bounds} can be improved to $\operatorname{sl}_{p^m}([A])+1 \leq \operatorname{ed}_{\operatorname{Br}_{p^m}}([A])$.
If this is indeed true, that would mean that over fields of transcendence degree $d$ over $k$, every algebra $A$ of exponent $p^m$ has index $\operatorname{ind}(A) \leq p^{\operatorname{sl}([A])}\leq p^{\operatorname{ed}([A])-1}\leq p^{d-1}$, proving an important well-known conjecture on the period-index problem for this particular case. Unfortunately, we do not have a proof yet.
\end{rem}

\section*{Appendix -- Standard Case}

We conclude with some comments on the case of an algebraically closed field $k$ of characteristic prime to $p$.
Given a field $E$ of transcendence degree $r$ over $k$, $E$ is a $C_r$ field, and so the symbol length of a central simple algebra $A$ in $\operatorname{Br}_{p^m}(E)$ of degree $p^{\ell m}$ is at most $m(p^{r-1}-1)$ by \cite[Theorem 8.2]{Matzri:2016}. Therefore, if we start with a central simple algebra $A$ of degree $p^{\ell m}$ and exponent $p^m$ over a field $F$ containing $k$ whose essential dimension is $r$, then $\operatorname{sl}_{p^m}([A]) \leq m(p^{r-1}-1)$ (a formula which was already implicitly obtained in \cite[Section 5]{Matzri:2016}). By solving for $r$, we obtain the formula
$$\operatorname{ed}_{\operatorname{Alg}_{p^{\ell m},p^m}}(A)\geq \operatorname{ed}_{\operatorname{Br}_{p^m}}([A])\geq  1+\log_p\left(\frac{\operatorname{sl}_{p^m}([A])}{m}+1\right).$$
Excluding the case of $p=m=\ell=2$, the existence of indecomposable algebras $A$ of degree $p^{\ell m}$ and exponent $p^m$ provides algebras $A$ of symbol length at least $\ell+1$, which gives the bound
$$\operatorname{ed}(\operatorname{Alg}_{p^{\ell m},p^m}) \geq 1+\log_p\left(\frac{\ell+1}{m}+1\right).$$
This lower bound is by no means as good as the known bounds in the literature (see \cite{BaekMerkurjev:2012}), but it is possible that this technique could lead to a better bound if we found a way of constructing indecomposable algebras of sufficiently large prescribed symbol length, or if the upper bound for the symbol length for $C_r$-fields from \cite[Theorem 8.2]{Matzri:2016} could be improved (for example to the conjectured bound of $r$).

\section*{Acknowledgements}

We thank Skip Garibaldi, Alexander Merkurjev and Jean-Pierre Tignol for their comments on the manuscript. We also thank the anonymous referee for the careful reading of the manuscript and the helpful comments.

\bibliographystyle{alpha}
\bibliography{bibfile}

\end{document}